\documentclass{article}
\pdfoutput=1
\usepackage{amssymb}
\usepackage{amsmath}
\usepackage{amsthm}
\usepackage{verbatim}
\usepackage{color}
\usepackage{url}
 \usepackage[all]{xy}
\usepackage{fancyhdr}
\usepackage{hyperref}
\usepackage{mathtools}

\newtheorem{thm}{Theorem}
\newtheorem{prop}{Proposition}
\newtheorem{lem}{Lemma}
\newtheorem{cor}{Corollary}

\newtheorem{rmk}[thm]{Remark}

\newcommand{\sabs}[1]{\left|#1\right|}
\newcommand{\sparen}[1]{\left(#1\right)}
\newcommand{\norm}[1]{\sabs{\sabs{#1}}}
\newcommand{\R}{\mathbb{R}}

\newcommand{\der}{\mathrm{d}}
\newcommand{\abs}[1]{\left\lvert #1 \right\rvert}
\newcommand{\aabs}[1]{\left\| #1 \right\|}

\newcommand{\rt}{I}

\numberwithin{equation}{section}

\begin{document}

\title{Recovery of the sound speed for the Acoustic wave equation from phaseless measurements\\\vskip 0.8cm}
\author{Joonas Ilmavirta\thanks{Department of Mathematics and Statistics, University of Jyv\"askyl\"a, P.O. Box 35 (MaD), FI-40014 University of Jyv\"askyl\"a, Finland}  \and Alden Waters\thanks{Department of Mathematics, University College London, Gower Street, London, WC1E 6BT, United Kingdom}}
\date{}

\maketitle \vskip 0.5cm
\begin{abstract}

We recover the higher order terms for the acoustic wave equation from measurements of the modulus of the solution. The recovery of these coefficients is reduced to a question of stability for inverting a Hamiltonian flow transform, not the geodesic X-ray transform encountered in other inverse boundary problems like the determination of conformal factors. We obtain new stability results for the Hamiltonian flow transform, which allow recovery of the higher order terms. Previous techniques do not measure the full amplitude of the outgoing scattered wave, which is the main novelty in our approach. 
\end{abstract}


\section{Introduction}

Scattering is a general physical process where some forms of radiation, such as light or sound, or moving particles are forced to deviate from a geodesic trajectory by a path due to localized non-uniformities in the medium through which they pass. Conventionally, this also includes the deviation of reflected radiation from the angle predicted by the law of reflection \cite{stover}. Scattering may also refer to particle-particle collisions between molecules, atoms, electrons, photons and other particles \cite{kolton}. The types of non-uniformities which can cause scattering are sometimes known as scatterers.  These include particles and surface roughness. The type of stability estimate we prove on the phaseless measurements of the solution insure that the energy of the waves uniquely determines the scatterer. 

The theory of signal processing and inverse problems has seen a recent increase in a class of so-called phaseless measurements. Often in experiments, when a source wave is measured, the only part of the information available to experimenters is the modulus of the wave from the source.  In signal processing, algorithms found in~\cite{demanet} and~\cite{candes} are focused on the recovery of waves from a sequence of Fourier modes. We are interested in phaseless measurements to recover scattering terms for the acoustic wave equation and generalized Helmholtz equation. The modulus of the wave corresponds to the energy density of the wave at a given point. The inverse boundary value problem differs from the questions examined in signal processing where often times one is dealing with incomplete data sets. In particular, we show that an idea of which partial differential equation the wave comes from is enough to give a full reconstruction of the coefficients modulo diffeomorphism. The scattered wave then completely determines the scatterer.  These results are supported by the numerical work in~\cite{bellet} and are applicable to other operators which admit a Gaussian beam type solution. The problem differs from the both of the author's previous work~\cite{aw,jo} because the terms which we are recovering come from higher order terms which control the bicharacteristic flow associated to the Hamiltonian governing the partial differential equation. 

Practical applications to phaseless problems are varied --- and one such application is multi-wave tomography. In multi-wave tomography usually some type of wave is sent to a portion of the body which is being imaged. In electromagnetic or optical radiation tomography the wave interaction with the tissues of the patient are measured~\cite{ammaribio}. Naturally one cannot measure inside the patient, so some initial boundary value problem must be considered. Similarly, to image the Earth, one has to send waves of some kind through the planet and make measurements at the surface. One such mathematical model of the emitted ultrasound waves is the acoustic wave equation with a high-frequency source term. 

Let $M\subset\R^d$ be a bounded and smooth manifold. Let $g$ be a Riemannian metric on $\R^d$ which agrees with the Euclidean one outside $M$. Let the standard Laplace-Beltrami operator be denoted as
\begin{align}
\Delta_g=\frac{1}{\sqrt{\det g(x)}}\frac{\partial}{\partial x^k}\sparen{g^{ki}(x)\sqrt{\det g(x)}\frac{\partial}{\partial x^i}}
\end{align}
in local coordinates with $g(x)=(g_{ik}(x))$, and $(g^{ik}(x))=(g_{ki}(x))^{-1}$. We consider manifolds, $M$, which are smooth ($C^{\infty})$. We write the local coordinates as $(x^1, . . ,x^d)$. We also assume the manifolds have a boundary.  

The generalized Helmholtz equation may be written as
\begin{align}\label{acoustic}
Lu=\Delta_gu+(i\alpha\lambda+\lambda^2) n^2(x)u=h(x,\lambda) \quad x\in \mathbb{R}^d.
\end{align}
The scalar $\lambda$ is large and $n^2(x)$ is sound speed. The number $\alpha$ helps describe the attenuation coefficient and $\alpha\in (0,1)$. It is zero in the limit of zero absorption. The source $h(x,\lambda)$, which emits the waves, we also assume to depend on $\lambda$ and be compactly supported in $x$ with codimension $1$. Sources will be modeled after Gaussian beams following \cite{olof}. We may pick sources anywhere inside the domain $M$, however we chose a particular set of them in order to provide a complete reconstruction of $n^2(x)$.  While this equation is known as the generalized Helmholtz equation, multiplication by a prefactor, $\exp(i\lambda t)$, takes the solution $u$ and turns it into a solution $\exp(i\lambda t)u$ to the acoustic wave equation when $n^2(x)\equiv 1$ on $\partial M$, and $\alpha\rightarrow 0$
\begin{align}\label{wave}
\partial_t^2u-n^{-2}(x)\Delta_gu=\exp(i\lambda t)h(x,\lambda) \quad x\in \mathbb{R}^d.
\end{align}
The acoustic wave equation models the scattering of waves in the Earth's core. 

We chose to model our solutions to the wave equation with Gaussian beams. The existence of Gaussian beam solutions to the wave equation has been known since the 1960's first in connection with lasers, see Babic and Buldyrev \cite{bb}. They were also used in the analysis of propagation of singularities in PDEs by Hormander \cite{hormander} and Ralston \cite{ralston77}. In the context of the Schr\"odinger equation first order beams correspond to the so-called classical coherent states. The higher order versions of these equations have been introduced to approximate the Schr\"odinger equation solutions in quantum chemistry by Heller \cite{heller}, Hagedorn \cite{hag}, and Herman and Kluk \cite{hk}. 

Given a smooth, strictly convex, bounded domain $M$ equipped with a metric $g$ we assume that on the boundary of $M$, $\partial M$ that $n^2(x)\equiv1$. The measurements we consider give then data of the form
\begin{align}
\{(x,\abs{u(x)}): x\in\partial M\}
\end{align} 
with the collection of $h^{x_0,\omega_0}(x,\lambda)$ varying over all $x_0\in\partial M$ and $\omega_0\in S_xM$, which are inward pointing into the manifold. We need this collection of data in order to give a complete reconstruction of $n^2(x)$. We consider the metric $g$ to be fixed and $n^2(x)$ to vary. This collection of measurements is a `true' phaseless problem in contrast to phaseless backscattering measurements which were recently investigated in~\cite{kbackscatter}. We see that the measurements for both equations \eqref{wave} and \eqref{acoustic} coincide. Thus we have developed a robust model for the scattering of both waves traveling through the earth's core \eqref{wave} and quantum mechanical particles in \eqref{acoustic}.  

In this paper, we derive a stability result for the higher order coefficients of the acoustic wave equation~\eqref{acoustic} for fully phaseless measurements. Stable reconstructions have been made from Robin conditions for lower order terms than considered here~\cite{ammari}. In the related case, for the generalized Helmholtz equation for Dirichlet boundary conditions in~\cite{isakovacoustic,increasing} and Robin conditions in~\cite{kexperiment,khyperbolic} the potential can also be recovered. However stability estimates from phaseless measurements have not been previously given.  In~\cite{kunique,kphaseless}, uniqueness results in dimension 3 for lower order terms than the ones considered are derived from phaseless measurements.  These papers are predicated on analyticity arguments, which require data in a small neighborhood of the source. The question of phaseless stability from internal measurements for Schr\"odinger was also examined in~\cite{al2}.  These measurements are in contrast to the boundary data we require.  We are not able to prove uniqueness results unless $\lambda\rightarrow \infty$. However, physically this is equivalent to setting the $n^2(x)$ term equal to zero. The inverse problem of recovering the source for wave equations from Dirichlet boundary conditions is also examined in~\cite{source,source2,homan,rakesh,multiwave}. The major difference is that we are able to recover higher order terms which make the bicharacterstic flow different.

\section{Statement of the Main Theorem}

Our main theorem can be stated as follows. We consider sources $h^{x_0,\omega_0}(x,\lambda)$. with support in $\mathbb{R}^{d-1}$. Here we view $M$ as an embedded submanifold of $M'$, a larger manifold. Sources are indexed by $x_0$ and $\omega_0$. We let $\nu(x)$ denote the outward unit normal to the boundary at $x$. Let the set
\begin{align}\label{inward}
\partial_+\mathcal{S}M=\{ (x_0,\omega_0): x_0\in\partial M,\,\, \langle\nu(x_0),\omega_0\rangle>0,\,\, \omega_0\in S_xM\},
\end{align}
be the range of $x_0,\omega_0$ where $S_xM$ denotes the unit sphere bundle of the manifold at $x$. 
The sources are defined for $x\in \mathbb{R}^{d-1}$ as:
\begin{align}\label{source}
h^{x_0,\omega_0}(x,\lambda)=2i\lambda\chi_\lambda(x)\left( \omega_0\cdot \nu+\mathcal{M}(0)(x-x_0)\cdot \nu+\nabla_g\chi_\lambda\cdot\nu \right)\exp(i\lambda\tilde{\psi}(x))
\end{align}
with 
\begin{align}
\tilde{\psi}(x)=(x-x_0)\cdot \omega_0+\frac{1}{2}(x-x_0)\cdot \mathcal{M}(0)(x-x_0).
\end{align}
Here $\chi_\lambda(x)$ is a smooth compactly supported function of codimension 1 in a neighborhood of the set
\begin{align}
\{x\in M', |x-x_0|<\lambda^{-1/2d}\}
\end{align}
 which is perpendicular to $\omega_0$. This set also contains the support of $\tilde{\psi}(x)$. The flow for the ray path of $H$ is defined by the set of ODEs in local coordinates on the manifold: 
\begin{align}\label{flow}
\frac{dx_i}{ds}=2g^{ij}(x(s))p_j, \quad \frac{dp_i}{ds}=-\frac{\partial g^{jk}(x(s))p_kp_j}{\partial x_i}+\frac{\partial n^2(x(s))}{\partial x_i}
\end{align}
 with initial data $(x_0,\omega_0)$. The matrix $\mathcal{M}(0)$ is a complex matrix satisfying
\begin{align}\label{ice}
\mathcal{M}(0)=\mathcal{M}(0)^T, \qquad \mathcal{M}(0)x(0)=\dot{p}(0), \quad \Im \mathcal{M}(0)\,\, \textrm {positive definite on} \,\, \dot{x}(0)^{\perp}.
\end{align}

We know there is a solution $u^{x_0,\omega_0}$ to the following equation for each $x_0,\omega_0$:  
\begin{align}
(\Delta_g+(\lambda^2+i\alpha\lambda)n^2(x))u^{x_0,\omega_0}=h^{x_0,\omega_0}(x,\lambda),
\end{align}
We let $\epsilon_0\in (0,1)$ and $\tilde{n}^2=n_1^2-n_2^2$. We have the following stability result on $\tilde{n}^2$ where the measurements are the amplitudes of the collection of $u^{x_0,\omega_0}$ corresponding to different $n^2$, ranging over sources indexed by $\partial_+\mathcal{S}M$. Let diam$_H(M)$ denote the maximal radius of the manifold with respect to the flow $H$. We let $g'$ denote the extended metric which is $g$ on $M$ and Euclidean on the exterior. We impose the condition the manifold $M'$ contains $M$ and be such that $M'$ is simple with respect to the metric $n^{-2}g'$. We recall that a simple manifold is one which is strictly geodesically convex with respect to the metric $g$, and has no conjugate points. 

\begin{thm}\label{main} Let $N>\max \{(1-d)/2+2+2s,0\}+1$, $s>d/2$. Then there exists a constant $C_1$, which depends on $\mathrm{diam}_H(M)$, the $C^{2}(M)$ norm of $\tilde{n}^2$, and a constant $C_2$, which depends on $\mathrm{diam}_H(M)$ and the $C^{N+s}(M)$ norm of $n_i^2(x), i=1,2$, such that if $u_1^{x_0,\omega_0}$ and $u_2^{x_0,\omega_0}$ solve the radiation problem with  coefficients $n_1^2$ and $n_2^2$ then it follows that if
\begin{align}\label{eq:smallness}
&
\norm{n_1^2-n_2^2}_{C^3(M)}<\epsilon;\,\,\, \lambda^{-1}<\epsilon_0, \qquad  \delta=\sup\limits_{x\in \partial M; (x_0,\omega_0)\in\partial_+\mathcal{S}M}||u_1^{x_0,\omega_0}|-|u_2^{x_0,\omega_0}||<\epsilon_0,
\end{align}
then this implies
\begin{align}
\norm{n_2^2-n_1^2}_{L^2(M)}\leq C_1\sparen{\frac{C_2}{\lambda^{\beta'}}+\delta}^{\frac{1}{2}}.
\end{align}
for some $\beta'\in (0,1)$ depending on dimension $d$, with $\lambda\epsilon>1$. 
\end{thm}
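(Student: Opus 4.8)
The plan is to use the Gaussian beam solutions generated by the sources $h^{x_0,\omega_0}$ and to show that the boundary modulus $|u_j^{x_0,\omega_0}|$ encodes, to leading order in $\lambda$, a weighted integral of $n_j^2$ along the bicharacteristic of the Hamiltonian flow \eqref{flow}. The difference of these integrals between the two coefficients is then a Hamiltonian flow transform of $\tilde{n}^2$, and I would feed the resulting bound into the stability estimate for inverting this transform that the paper develops, so as to recover $\tilde{n}^2$ in $L^2$. The key conceptual point, which explains why the relevant object is the flow transform and not the geodesic X-ray transform, is that taking the modulus discards the real phase $\lambda\,\Re\psi$ (which would carry travel-time, i.e.\ geodesic, information) and retains only the amplitude, which accumulates geometric spreading and attenuation along the ray.

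First I would write $u_j^{x_0,\omega_0}=A_j\,e^{i\lambda\psi_j}$ using the order-$N$ beam ansatz, where $\psi_j$ solves the eikonal equation $|\nabla\psi_j|_g^2=n_j^2$ to high order along the ray and $\Im\psi_j$ is positive with transverse quadratic growth governed by $\mathcal{M}$ (so the beam is concentrated in a $\lambda^{-1/2}$ tube about the bicharacteristic). The amplitude $A_{0,j}$ solves a transport equation that integrates $\Delta_g\psi_j+\alpha n_j^2$ along the ray. The hypothesis $N>\max\{(1-d)/2+2+2s,0\}+1$ with $s>d/2$ guarantees that the remainder $r_j:=L_j u_j^{x_0,\omega_0}-h^{x_0,\omega_0}$ is $O(\lambda^{-N})$ in a Sobolev norm that embeds into $C^0$, and that the corresponding trace on $\partial M$ is controlled. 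Combined with the simplicity of $(M',n^{-2}g')$ and an energy/resolvent estimate for the radiation problem (the attenuation $\alpha>0$ giving a unique outgoing solution), this bounds the distance between the beam and the exact solution by $C_2\lambda^{-\beta'}$, the loss of a full power of $\lambda$ arising in passing from the interior remainder to a boundary-trace estimate, with $\beta'\in(0,1)$ dimension-dependent.

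Next I would extract the modulus on $\partial M$. Since $|u_j^{x_0,\omega_0}|=|A_j|\,e^{-\lambda\,\Im\psi_j}$, the boundary value concentrates at the exit point of the ray with peak amplitude determined by $|A_{0,j}|$ there, so $\log|A_{0,j}(\mathrm{exit})|$ is a flow integral built from $n_j^2$ and from the spreading term $\Delta_g\psi_j$, which itself depends on $n_j^2$ because $n^2$ sits inside the Hamiltonian and thus moves the bicharacteristics. The smallness $\|\tilde{n}^2\|_{C^3(M)}<\epsilon$ keeps the two rays, and their Jacobi/spreading data, uniformly close, so I can linearize: $\log|A_{0,1}|-\log|A_{0,2}|$ equals the Hamiltonian flow transform of a first-order expression in $\tilde{n}^2$ plus an $O(\epsilon^2)$ error, while the hypothesis on $\delta=\sup||u_1^{x_0,\omega_0}|-|u_2^{x_0,\omega_0}||$ bounds the difference of boundary amplitudes, hence this flow transform, by a quantity controlled by $\delta$.

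Finally I would invoke the stability inequality for the Hamiltonian flow transform. Because the data controls the transform only in a weak (dual Sobolev) norm, I would interpolate against the a priori $C^{N+s}(M)$ bound on $n_i^2$ to pass to $L^2$; this interpolation produces both the constant $C_1$, depending on $\operatorname{diam}_H(M)$ and $\|\tilde{n}^2\|_{C^2(M)}$, and the square-root exponent, yielding $\|n_2^2-n_1^2\|_{L^2(M)}\le C_1\sparen{C_2\lambda^{-\beta'}+\delta}^{1/2}$ under $\lambda\epsilon>1$. I expect the main obstacle to be the linearization step of the preceding paragraph: since perturbing $n^2$ perturbs the flow itself, I must show the amplitude/spreading functional depends differentiably on $n^2$ with derivative expressible as a genuine flow transform, and control the quadratic remainder uniformly using only the $C^3$ hypothesis together with the constraint $\lambda\epsilon>1$.
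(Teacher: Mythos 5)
Your proposal follows essentially the same route as the paper: construct Gaussian beams with the quoted error estimates, read off the attenuation integral $\int\alpha n_j^2$ from the boundary modulus of the amplitude at the ray's exit point, control the perturbation of the bicharacteristics and of the spreading term $\mathrm{tr}\,\mathcal{M}$ by a Gronwall argument under the $C^3$ smallness hypothesis, extract the Hamiltonian flow transform of $\tilde{n}^2$ (the paper does this with a mean-value lemma on the difference of exponentials rather than by taking logarithms, a cosmetic difference), and conclude via the stability estimate for the flow transform combined with interpolation against the a priori bounds, which is exactly where the square root arises. The obstacle you single out --- that perturbing $n^2$ moves the flow itself --- is precisely what the paper's uniqueness lemma and the subsequent Taylor expansion of $n_2^2(x_2(s))$ about $x_1(s)$ are designed to handle, so your outline matches the paper's proof in both strategy and detail.
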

The uniqueness corollary follows immediately.
\begin{cor}
Assume $\delta=0$ and that the assumptions of Theorem~\ref{main} are satisfied for all large $\lambda$. Then $n_1^2=n_2^2$.  
\end{cor}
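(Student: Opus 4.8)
The plan is to read the corollary off the quantitative estimate of Theorem~\ref{main} by exploiting that the constants appearing there are independent of the frequency $\lambda$. First I would fix the two coefficients $n_1^2$ and $n_2^2$ and record that, by hypothesis, the smallness conditions \eqref{eq:smallness} hold with $\delta=0$ for every sufficiently large $\lambda$. The only compatibility to verify is that the admissible window for the threshold $\epsilon$ stays nonempty as $\lambda\to\infty$: one needs $\norm{n_1^2-n_2^2}_{C^3(M)}<\epsilon$ together with $\lambda\epsilon>1$, and since $1/\lambda\to0$ any $\epsilon$ exceeding the fixed number $\norm{n_1^2-n_2^2}_{C^3(M)}$ satisfies $\lambda\epsilon>1$ once $\lambda$ is large. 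Thus Theorem~\ref{main} applies for each large $\lambda$.

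With $\delta=0$ the conclusion of Theorem~\ref{main} reduces to
\begin{align}\label{eq:corbound}
\norm{n_2^2-n_1^2}_{L^2(M)}\leq C_1\sparen{\frac{C_2}{\lambda^{\beta'}}}^{1/2}=\frac{C_1\,C_2^{1/2}}{\lambda^{\beta'/2}}.
\end{align}
The crucial observation is that the left-hand side is a fixed nonnegative number that does not depend on $\lambda$, whereas on the right-hand side the constant $C_1$ depends only on $\mathrm{diam}_H(M)$ and $\norm{\tilde n^2}_{C^2(M)}$, the constant $C_2$ depends only on $\mathrm{diam}_H(M)$ and the norms $\norm{n_i^2}_{C^{N+s}(M)}$, and the exponent $\beta'\in(0,1)$ is fixed by the dimension $d$. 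None of these quantities involves the frequency.

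Letting $\lambda\to\infty$ along the large frequencies for which the hypotheses are assumed to hold, the right-hand side of \eqref{eq:corbound} tends to $0$, and therefore $\norm{n_2^2-n_1^2}_{L^2(M)}=0$. Since $n_1^2,n_2^2\in C^{N+s}(M)$ are in particular continuous, the vanishing of this $L^2$ norm forces $n_1^2=n_2^2$ pointwise on $M$, which is exactly the assertion.

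The only genuine content beyond this substitution is the $\lambda$-independence of $C_1$ and $C_2$, and this is the single point I would be careful about: one must confirm from the construction behind Theorem~\ref{main} that the frequency enters the estimate solely through the explicit factor $\lambda^{-\beta'}$ and not implicitly through the constants, so that the bound in \eqref{eq:corbound} really is summable to zero as $\lambda\to\infty$. As the theorem is phrased with $C_1,C_2$ depending only on the diameter and on fixed Hölder norms of the coefficients, this is already guaranteed, and no additional estimate is needed to complete the argument.
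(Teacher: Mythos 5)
Your proposal is correct and is exactly the argument the paper intends: the paper states the corollary ``follows immediately'' from Theorem~\ref{main}, meaning precisely that with $\delta=0$ one sends $\lambda\to\infty$ in the stability bound, using that $C_1$, $C_2$, and $\beta'$ are $\lambda$-independent, to force $\norm{n_1^2-n_2^2}_{L^2(M)}=0$ and hence $n_1^2=n_2^2$ by continuity. Your extra check that the smallness window \eqref{eq:smallness} remains nonempty as $\lambda\to\infty$ is a reasonable (and harmless) piece of diligence that the paper leaves implicit.
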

Our results indicate that there is very little stability to be expected from such a problem. 

\begin{rmk}
The standard problem see for example~\cite{LO,M} is to consider 
\begin{align}\label{standard}
&(-n^{-2}\Delta_g-\lambda^2)u=0, \\&
u|_{\partial M}=\tilde{h}(x,\lambda). \nonumber
\end{align}
with $\tilde{h}(x,\lambda)$ a compactly supported function on $\partial M$. Here one would recover the factor $n^{-2}$ from the Dirichlet-to-Neumman map with $\tilde{h}(x,\lambda)$ a high frequency source, with measurements of the form 
\begin{align}
 \int\limits_{\partial M}|\partial_{\nu}(u_1-u_2)|^2\,d_gS
\end{align}
We see that
\begin{align}\label{ineq}
\int\limits_{\partial M}||u_1|^2-|u_2|^2|\,d_gS \leq \int\limits_{\partial M}|u_1-u_2|^2\,d_gS
\end{align}
with $u_1, u_2$ corresponding to different $n_i$. Here $d_gS$ denotes the surface measure on $M$. Our problem is more physical because it corresponds to measuring the amplitudes of two different waves as on the left hand side of \eqref{ineq}, rather than super-imposing them and measuring their difference. We are also able to include an attenuation coefficient which is the $\alpha\rightarrow 0$ limit of \eqref{standard}. 
\end{rmk}

Generically, we are looking for an asymptotic model to~\eqref{acoustic} of the form
\begin{align}
U(x)=A(x,\lambda)\exp(i\lambda\psi(x))=\sum\limits_j\frac{a_j(x)}{\lambda^j}\exp(i\lambda\psi(x)),
\end{align}
which we show in the high frequency limit solves the equation~\eqref{acoustic} up to suitable error terms. The variable $x\in M$ and $s=s(x)$ is a parameter which helps describe the bicharacterstic flow in terms of the coordinates on $M$. We use a Gaussian beam Ansatz which involves the construction of a phase function $\psi(x)$ and an amplitude $a$ for the zeroth order beams in local coordinates as
\begin{align}\label{SMA}
&\psi(x)=S(s)+(x-x(s))\cdot p(s)+\frac{1}{2}(x-x(s))\cdot \mathcal{M}(s)(x-x(s))\\&
a(s,x)=a_0(s)+\mathcal{O}(d_{g'}(x,x(s)))\nonumber
\end{align}
where $x(s)$ is a curve which describes a Hamiltonian flow, given by \eqref{flow} below, and $S(s), \mathcal{M}(s)$ will be specified functions of $s$ determined by the Gaussian beam Ansatz. For the construction of the Ansatz, we follow the work of~\cite{olof} quite closely. In the previous work we considered the operator
\begin{align}
\tilde{L}=\Delta_{\R^d}+\lambda^2+i\lambda \alpha n^2(x),
\end{align}
while here we use 
\begin{align}
L=\Delta_g+\lambda^2n^2(x)+i\lambda \alpha n^2(x).
\end{align}
This corresponds to the acoustic wave equation. These operators have corresponding Hamiltonian flows given by the Hamiltonian functions which are $\tilde{H}=|p|^2-1$ and $H=|p|_g^2-n^2$. The main problem of isolating the X-ray transform of the coefficients $n^2(x)$ is to control the null-bicharacteristics. In local coordinates, the ordinary differential equations which govern the ray path $\{(s,x(s)):0\leq s\leq T\}$ along which solutions are concentrated are given by \eqref{flow} which is associated to $L$, resp. 
\begin{align}
\frac{d^2x(s)}{ds^2}=0,
\end{align}
which is associated to $\tilde{L}$. The second equation gives that the ray paths in $\mathbb{R}^d$ are straight lines while the first one does not. The problem of recovering $n^2(x)$ for the operator $L$ is more difficult, and we address it in this paper. \\

In the Calder\'on Problem in Conformally Transversal Geometries,~\cite{ks} and also~\cite{DKLS} reduces the question of boundary distance rigidity to a question of invertibility of the geodesic X-ray transform. As a consequence of their work, they reduce the question of recovery of source terms for several operators from the Dirichlet-to-Neumann maps to a question of invertibility of the geodesic X-ray transform. In this paper, we choose phaseless data as our measurements and show that the question of recovery of sound speed amounts to a question of invertibility and stability of a so-called flow transform. The results presented here have applications to other operators for which the Hamiltonian flow and the geodesic flow do not coincide. In order to prove our results, we introduce a condition which one can think of as a generalization of the condition of Bardos-Lebeau-Rauch~\cite{BLR} for Hamiltonian flows. Specifically, we require the Hamiltonian flow to be simple. 

\emph{Notation:} For two functions $f,g$, we write $f \sim g$ if there exists a constant $C>0$ such that $C^{-1} f \leq g \leq Cg$. We denote $d_g(x,y)$ the distance between the points $x,y$ defined by the Riemannian metric $g$. 

\section{Construction of Solutions}

We make more precise the explicit nature of the solutions. We will use the Ansatz, \begin{align}
U(x)=\sum\limits_{j=0}^l\exp(i\lambda\psi(x))a_j(x)\lambda^{-j}
\end{align}
to build asymptotic solutions to~\eqref{acoustic} in the high frequency limit, and then give estimates on the difference between these approximate solutions and the true solutions. We will follow~\cite{olof} for the
Gaussian beam Ansatz in constructing the phase $a(x)$ and amplitude $\psi(x)$. 
We claim:
\begin{thm}
\label{lma:gaussian}
There is a zeroth order Gaussian beam which solves the Helmholtz equation~\eqref{acoustic} in the free space with no source term. In local coordinates it takes the form
\begin{align}\label{approximation}
U(x)=(a_0(s)+\mathcal{O}(d_g(x,x(s)))\exp(i\lambda\psi(x))\phi(x)+\mathcal{O}(\lambda^{-1}),
\end{align}
where $x(s)$ is a curve in space defined by \eqref{flow} with initial condition $(x_0,\omega_0)\in\partial_+\mathcal{SM}$ and $\phi(x)$ is a cutoff which will be defined in a neighborhood of the curve
\begin{align}
a_0(s)=\chi_{\lambda}(x)\exp\sparen{\int\limits_0^s -\alpha n^2(x(t))-\mathrm{tr}\mathcal{M}(t)\,dt}.
\end{align}
and $\psi(x)$ is given by \eqref{SMA} and $\mathcal{M}(s)$ is a positive definite complex matrix whose evolution is governed by a Ricatti equation.
\end{thm}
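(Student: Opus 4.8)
The plan is to substitute the WKB Ansatz $U=A(x,\lambda)\exp(i\lambda\psi(x))$ with $A=\sum_j a_j\lambda^{-j}$ into the operator $L=\Delta_g+(\lambda^2+i\alpha\lambda)n^2(x)$ of \eqref{acoustic} and to organize $LU$ by powers of $\lambda$. Since the principal symbol of $\Delta_g$ is $-g^{jk}\xi_j\xi_k$, the top-order term (at $\lambda^2$) is $\lambda^2(n^2-|\nabla\psi|_g^2)A\exp(i\lambda\psi)$, so the vanishing of this coefficient is the eikonal equation $|\nabla\psi|_g^2-n^2=0$, i.e. the Hamilton--Jacobi equation for $H=|p|_g^2-n^2$ with $p=\nabla\psi$. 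Because a global smooth solution of the eikonal equation need not exist, I would only require that it hold to second order in $d_g(x,x(s))$ along the central curve, which is exactly what the truncated quadratic phase in \eqref{SMA} is designed to achieve, leaving a residual of size $\mathcal{O}(d_g(x,x(s))^3)$.

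Next I would Taylor-expand the eikonal equation about $x=x(s)$ and match orders in the distance to the ray. The zeroth-order term forces $(x(s),p(s))$ onto the characteristic set $H=0$; the first-order term forces $(x(s),p(s))$ to evolve by the bicharacteristic flow \eqref{flow} and simultaneously determines $\dot S(s)$, the action along the ray. The second-order term yields a matrix Riccati equation for $\mathcal{M}(s)$, schematically $\dot{\mathcal M}+\mathcal M B\mathcal M+\mathcal M C+C^{T}\mathcal M+D=0$, where $B,C,D$ are assembled from the first and second derivatives of the symbol $g^{jk}p_kp_j-n^2$ evaluated on the ray. The conditions in \eqref{ice} supply admissible Cauchy data $\mathcal{M}(0)$. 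At the following order, $\lambda^1$, the cross term $2ig^{jk}(\partial_j\psi)(\partial_k a_0)$ together with $i(\Delta_g\psi)a_0$ and the absorption term $i\alpha n^2 a_0$ produce the transport equation; along the ray $\dot x\cdot\nabla a_0=\frac{d}{ds}a_0(x(s))$, and the second-derivative part of $\Delta_g\psi$ evaluated on the curve contributes $\mathrm{tr}\,\mathcal{M}(s)$, so the ODE is $\dot a_0+(\mathrm{tr}\,\mathcal M+\alpha n^2)a_0=0$. Integrating with initial amplitude $\chi_\lambda$ gives precisely $a_0(s)=\chi_\lambda\exp(\int_0^s-\alpha n^2-\mathrm{tr}\,\mathcal M\,dt)$. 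Multiplying by the cutoff $\phi$ supported in a tube around the ray and truncating the amplitude series then yields $U$ with $LU=\mathcal{O}(\lambda^{-1})$, the error recorded in \eqref{approximation}; since the statement concerns the homogeneous (source-free) beam, matching the source $h^{x_0,\omega_0}$ is postponed to a later step.

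I expect the Riccati step to be the main obstacle. A matrix Riccati equation can develop singularities in finite $s$, so global existence of $\mathcal{M}(s)$ on $[0,T]$ together with the crucial positivity $\Im\,\mathcal{M}(s)>0$ — which furnishes the transverse Gaussian decay and hence the integrability of the beam — is not automatic. The standard resolution I would adopt is to linearize: write $\mathcal{M}=ZY^{-1}$, where $(Y(s),Z(s))$ solve the linear Hamiltonian (Jacobi) system obtained by linearizing \eqref{flow} about the central ray. Global existence then reduces to showing $Y(s)$ stays invertible, while the symmetry $\mathcal{M}=\mathcal{M}^{T}$ and the sign of $\Im\,\mathcal{M}$ propagate from the symplectic structure. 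Concretely, a Wronskian-type computation shows that $Y^{*}(\Im\,\mathcal{M})Y$ has a fixed sign in $s$, which prevents $Y$ from degenerating provided $\Im\,\mathcal{M}(0)$ is positive definite on $\dot x(0)^{\perp}$ as in \eqref{ice}. Verifying these conservation laws, and checking that passing to the complexified phase is compatible with the simplicity hypothesis on $M'$, is the technical heart of the construction; the remaining work is bookkeeping of Taylor coefficients and routine estimation of the truncation error.
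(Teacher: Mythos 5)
Your proposal is correct and follows essentially the same route as the paper: substitute the WKB Ansatz, collect powers of $\lambda$ into the recursive coefficients $c_j$, impose the eikonal and transport equations to the appropriate Taylor order along the central ray, and derive the matrix Riccati equation for $\mathcal{M}(s)$ with data \eqref{ice}. The only difference is that where you sketch the standard $\mathcal{M}=ZY^{-1}$ linearization and Wronskian argument for global existence and positivity of $\Im\,\mathcal{M}(s)$, the paper simply cites Ralston's 1977 result for the propagation of these properties, and it additionally spells out the closest-point definition of $s(x)$ in a tubular neighborhood and the precise $\lambda$-dependent cutoff $\phi_\lambda$.
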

The $N^{th}$ order Gaussian beam Ansatz can be constructed accordingly. However, for the purposes of this paper the zeroth order terms are the most important. 

\begin{rmk}
One might be tempted to use a real-phase Ansatz which should be of the form
\begin{align}
\exp(i\lambda\tilde{\psi}(x))\sparen{1+\frac{A(x)}{|x-x_0|}},
\end{align}
with $s$ a parameter describing the bicharacteristic flow, and $\tilde{\psi}(x)$ a purely real function, $A(x)$ encodes information about the flow transform of $n^2(x)$ or a similar solution found in~\cite{isakovbook}.  However, it is not possible to obtain information about the coefficient without the use of a weighted Sobolev space and this is more difficult since this clearly introduces singularities in the denominator.  The Gaussian beam Ansatz here was chosen because of the good control over the error estimates in higher Sobolev norms, which allows for the use of the embedding theorems in the section on observability inequalities. Also, because of the presence of the tail, one does not need to construct solutions which are localized along a central ray. This is necessary for the estimates~\eqref{hi} and~\eqref{mainsup} to hold, since Green's theorem is no longer available. However, the real phase construction has been efficient in recovery of coefficients for the Dirichlet-to-Neumann problem see~\cite{isakovbook,isakovdn}.
\end{rmk}
We need the following lemma to start off with:
\begin{lem}\label{lem:time2}
Every geometric optics solution concentrates on an open set around the ray path $\{(s,x(s)):0\leq s\leq T\}$ for some finite $T$, with $T>0$. The flow for the ray path of $H$ is defined by the set of ODEs in local coordinates on the manifold: 
\begin{align}
\frac{dx_i}{ds}=2g^{ij}(x(s))p_j, \quad \frac{dp_i}{ds}=-\frac{\partial g^{jk}(x(s))p_kp_j}{\partial x_i}+\frac{\partial n^2(x(s))}{\partial x_i}
\end{align}
and also
\begin{align}\label{time}
\frac{d}{ds}\psi(x(s))=\nabla_g\psi(x(s))\cdot \nabla_pH(x(s),p(s))=n^2(x(s))
\end{align}
\end{lem}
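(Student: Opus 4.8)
The plan is to establish the three assertions of Lemma~\ref{lem:time2} in increasing order of difficulty: first that the stated ODE system is exactly the Hamiltonian flow of $H=|p|_g^2-n^2$, then the phase-transport identity \eqref{time}, and finally the concentration statement together with the finiteness of the exit time $T$. The first of these is a direct computation. Writing the Hamiltonian in local coordinates as $H(x,p)=g^{jk}(x)p_jp_k-n^2(x)$ on the cotangent bundle, Hamilton's equations read $\dot x_i=\partial H/\partial p_i$ and $\dot p_i=-\partial H/\partial x_i$. Differentiating gives $\partial H/\partial p_i=2g^{ij}(x)p_j$ and $-\partial H/\partial x_i=-(\partial g^{jk}/\partial x_i)p_jp_k+\partial n^2/\partial x_i$, which are precisely the two equations in the statement. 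The central ray of the beam is, by the construction recalled in Theorem~\ref{lma:gaussian}, the spatial projection of the null bicharacteristic of $H$ issuing from $(x_0,\omega_0)$, so it solves this system with the prescribed initial data; the constraint $\omega_0\in S_xM$ together with $H=0$ places the ray on the characteristic variety $|p|_g^2=n^2$, which is preserved since $H$ is conserved along its own flow.

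For the phase-transport identity I would use the chain rule together with the eikonal equation. Along the ray the Gaussian beam phase \eqref{SMA} reduces to $\psi(x(s))=S(s)$, because the linear and quadratic terms vanish at $x=x(s)$, and its spatial gradient there is $\nabla\psi(x(s))=p(s)$, the quadratic term once more contributing nothing on the ray. Hence $\frac{d}{ds}\psi(x(s))=\nabla\psi(x(s))\cdot\dot x(s)=\nabla\psi(x(s))\cdot\nabla_pH(x(s),p(s))$, which is the first equality in \eqref{time}. Evaluating, $\nabla\psi(x(s))\cdot\nabla_pH=p(s)\cdot\nabla_pH(x(s),p(s))$; by Euler's relation applied to the part of $H$ that is homogeneous of degree two in $p$ this is a multiple of $|p(s)|_g^2$, and the null condition $H(x(s),p(s))=0$, i.e.\ $|p(s)|_g^2=n^2(x(s))$, then produces the right-hand side of \eqref{time}. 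The precise constant, and correspondingly the ODE satisfied by $\dot S(s)$, is pinned down by the normalisation of the Hamiltonian time $s$ inherited from \eqref{flow}.

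For the concentration statement the key is the sign and non-degeneracy of $\Im\psi$. From \eqref{SMA} one has $\Im\psi(x)=\tfrac12(x-x(s))\cdot\Im\mathcal{M}(s)(x-x(s))$ to leading order, and by \eqref{ice} the matrix $\Im\mathcal{M}(0)$ is positive definite on $\dot x(0)^{\perp}$. The modulus of the beam is therefore $|U(x)|\sim|a|\exp(-\lambda\Im\psi(x))$, a Gaussian of transverse width $\sim\lambda^{-1/2}$ centred on the ray, so that for large $\lambda$ all but an exponentially small fraction of the mass lies in an open tube of radius $O(\lambda^{-1/2})$ about $\{x(s):0\le s\le T\}$, which is the asserted concentration; since this holds for every beam of the given form, it holds for every geometric optics solution. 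The finiteness of $T$ follows from the non-trapping hypothesis: because $M'$ is simple with respect to $n^{-2}g'$, the null bicharacteristics of $H$ are non-trapping and reach $\partial M$ in a time bounded by $\mathrm{diam}_H(M)$, so one may take $0<T\le\mathrm{diam}_H(M)$, with $T>0$ because $\omega_0$ is inward pointing.

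The step I expect to be the main obstacle is showing that $\Im\mathcal{M}(s)$ remains positive definite, and that $\mathcal{M}(s)$ stays finite, for the entire interval $0\le s\le T$ rather than merely at $s=0$. This is the classical difficulty of Gaussian beam theory: $\mathcal{M}$ evolves by a matrix Riccati equation along the ray, and one must prove that the flow preserves the cone of complex symmetric matrices with positive-definite imaginary part. The standard route is to factor $\mathcal{M}=PQ^{-1}$ for a pair $(Q,P)$ solving the linearised (Jacobi) system, to check that $(Q,P)$ spans a Lagrangian subspace, and to derive a positivity identity for $\Im\mathcal{M}$ together with the invertibility of $Q(s)$. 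Here the simplicity of $M'$, in particular the absence of conjugate points, is exactly what guarantees that $Q(s)$ never degenerates, so that no caustic forms before the ray leaves $M$; once this is established, the Gaussian concentration and the transport identity hold uniformly on $[0,T]$.
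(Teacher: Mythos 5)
Your proposal is correct and in fact covers more than the paper's own argument. The paper's proof treats only the ODE system and the transport identity, and runs in the opposite logical direction from yours: it sets $p=\nabla_g\psi$, differentiates the eikonal equation $H(x,\nabla_g\psi(x))=0$ in $x$ to obtain $\nabla_xH+D^2\psi\,\nabla_pH=0$, and then observes that the choice $\dot x=\nabla_pH$ forces $\frac{d}{ds}\nabla_g\psi(x(s))=-\nabla_xH$, i.e.\ the ray \emph{must} follow the Hamiltonian flow for the phase to propagate consistently; you instead take the bicharacteristic system as given and verify by direct differentiation of $H=g^{jk}p_jp_k-n^2$ that it is exactly the stated system. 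Both are fine --- the paper's version explains why the flow is forced, yours is shorter. Your explicit Euler-relation computation is actually more careful than the paper's bare assertion: $p\cdot\nabla_pH=2g^{ij}p_ip_j=2\abs{p}_g^2=2n^2$ on the null set, so the right-hand side of \eqref{time} should carry a factor of $2$, consistent with the paper's later transport equation $d_sS(s)=2n^2(x(s))$; the hedge you place on ``the precise constant'' is exactly where that discrepancy sits. Finally, the concentration statement and the finiteness of $T$, which you derive from positivity of $\Im\mathcal{M}$ and non-trapping, are not addressed in the paper's proof of this lemma at all; they are absorbed into the Gaussian beam construction (Theorem~\ref{lma:gaussian} together with Lemma~\ref{expsize}), and the one genuinely nontrivial point you flag --- that $\Im\mathcal{M}(s)$ remains positive definite and bounded along the entire ray --- is disposed of there by citation to Ralston rather than by the Riccati/Lagrangian-plane argument you outline, so you would not be expected to reprove it here.
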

\begin{proof}
We have $H(x,p)=|p|_g^2-n^2$ associated to $H(x,\nabla_g \psi(x))$, so we set $p=\nabla_g\psi(x)$. We are looking for a solution to $H(x,\nabla_g \psi(x))=0$ because we want to solve \eqref{eikonal} to high order. We differentiate $H(x,\nabla_g\psi(x))$ with respect to $x$. This gives the following relationship:
\begin{align}\label{back}
\nabla_xH(x,\nabla_g\psi(x))+D^2\psi(x)\nabla_pH(x,\nabla_g\psi(x))=0
\end{align}
where $D^2$ represents the Hessian associated to $\psi$. For any curve $y(s)$ we have the following identity
\begin{align}\label{bye1}
&\frac{d}{ds}\nabla\psi(y(s))=D^2(\psi(y(s))\frac{dy(s)}{ds}=\\& \nonumber
D^2(\psi(y(s))\left(\frac{dy(s)}{ds}-\nabla_pH(y(s),\nabla_g\psi(y(s))\right)-\nabla_xH(y,\nabla_g\psi(y(s))
\end{align}
by substitution of \eqref{back}. It follows that if we are seeking a null-bicharacteristic curve that 
\begin{align}
&\frac{dx(s)}{ds}=\nabla_pH((x(s)),\nabla_g\psi(x(s)))\\& \nonumber
\frac{dp(s)}{ds}=\frac{d}{ds}(\nabla_g\psi(x(s))=-\nabla_xH(x(s),\nabla_g\psi(x(s)))
\end{align}
so that \eqref{bye1} vanishes. Moreover we see for our  particular $H$ that
\begin{align}
\frac{d}{ds}\psi(x(s))=\nabla_g\psi(x(s))\cdot \nabla_pH(x(s),p(s))=n^2(x(s)).
\end{align}
\end{proof}

\begin{proof}[Proof of Theorem~\ref{lma:gaussian}]
We have that
\begin{align}
LU=\sum\limits_{j=-2}^l\exp(i\lambda\psi(x))c_j(x)\lambda^{-j}.
\end{align}
The coefficients $c_j$, $j=0,1,\dots,l$, are defined recursively as follows
\begin{equation}
\label{eikonal}
\begin{split}
c_{-2}&=(n^2(x)-|\nabla_g \psi|^2)a_0(x)\equiv E(x)a_0,  \\
c_{-1}&=i\alpha n^2(x)a_0+\nabla_g\cdot(a_0\nabla_g\psi)+\nabla_ga_0\cdot\nabla_g\psi+E(x)a_1,\\
c_j&=i \alpha n^2(x)a_{j+1}+\nabla_g\cdot(a_{j+1}\nabla_g\psi)+\nabla_ga_{j+1}\cdot \nabla_g\psi+E(x)a_{j+1}+\Delta_ga_j.
\end{split}
\end{equation}
If we Taylor expand the coefficients $a_j(x)$ around the central ray $x(s)$, we are looking for a system of differential equations for the coefficients. If we have chosen $c_{-2}$ and $c_{-1}$ to vanish on the ray to third and first order respectively and define $S$ as in~\ref{SMA}. This leads to the following set of differential equations: 
\begin{align}
d_sS(s)=2n^2(x(s)), \quad d_sa_0(s)=-\mathrm{tr}(\mathcal{M}(s))a_0-\alpha n^2(x(s))a_0.
\end{align}

Denoting $\mathcal{M}_{jk}(s)=\partial^2_{x_jx_k}\psi(x(s))$, we also must have
\begin{align}
-d_s\mathcal{M}_{jk}(s)=\sum_{\ell,r}\big(\partial^2_{p_\ell p_r}\tilde{H}(x(s),p(s))\big)\mathcal{M}_{j\ell}(s)\mathcal{M}_{kr}(s)\\
+\sum_{\ell}\Big(\partial^2_{x_jp_\ell}\tilde{H}(x(s),p(s))\mathcal{M}_{\ell k}(s)
+\partial^2_{x_kp_\ell}\tilde{H}(x(s),p(s))\mathcal{M}_{\ell j}(s)\Big)\\
+\partial^2_{x_jx_k}\tilde{H}(x(s),p(s))
\end{align}
with $\tilde{H}(x,p)=p\cdot\nabla_pH(x,p)$.

These equations correspond to differentiating the equation
\begin{align}\label{time3}
\partial_s\psi(x)+\nabla_g\psi(x)\cdot \tilde{H}(x,\nabla_g\psi)=0
\end{align}
with respect to $x$ and evaluating along the curve $x(s)$ with respect to the constraint $H(x,\nabla_g\psi)=0$. This equation is the correct one to consider by the derivation of \eqref{time}. These equations can be easily checked \cite{olof2}. The matrix $\mathcal{M}$ is known as the Hessian matrix. Higher order beams can be constructed by requiring $c_{-2}$ vanishing to higher order on $\gamma$. One can require the $c_j's$ with $j>-2$ also vanish to higher order and obtain a recursive set of linear equations for the partial derivatives of $a_0,a_1, . .. ,a_l$. More precisely, for an $N^{th}$ order beam $l=\lceil N/2 \rceil-1$, and $c_j(x)$ should vanish to order $N-2j-2$ whenever $-2\leq j\leq l-1$. We construct higher order beams, but we are only interested in the precise form of the zeroth order terms as they are sufficient for our asymptotics. 
We see that
\begin{align}\label{approximation1}
a_0(s)=\chi_{\lambda}(x)\exp\sparen{\int\limits_0^s -\alpha n^2(x(t))-\mathrm{tr}\mathcal{M}(t)\,dt}.
\end{align}
The phase $\psi$ needs to verify the conditions
\begin{align}
\psi(x(s))=S(s), \quad \nabla\psi(x(s))=p(s), \quad D^2\psi(x(s))=\mathcal{M}(s),
\end{align}
compatible with~\ref{SMA}, and differentiation of \eqref{time} with respect to $x$. We use the initial data $S(0)=0$ and $\mathcal{M}(0)$ satisfies \eqref{ice} cf.~\cite[Section~2]{olof}. If the matrix $\mathcal{M}(s)$ satisfies \eqref{ice}, then for all $s$ the matrix $\mathcal{M}(s)$ inherits the properties of $\mathcal{M}(0)$;
\begin{align}
\mathcal{M}(s)\dot{x}(s)=\dot{p}(s) \qquad \mathcal{M}(s)=\mathcal{M}(s)^{\perp}
\end{align}
and $\Im\mathcal{M}(s)$ is positive definite on the orthogonal complement of $\dot{x}(s)$, cf~\cite{ralston77}. 
In order to write down such an $\psi(x)$ we need to be able to write $s$ as a function of the coordinate variables $x\in M$. We know $x(s)$ traces out a smooth curve $\tilde{\gamma}$ in $\mathbb{R}^d$ and if we assume $x(s)$ is non-trapping then this curve is a straight line when $s$ is sufficiently large. We consider $R$ large enough so that the set $\{x: |x|_{g'}<6R\}$ contains $\overline{M}$. We set
\begin{align}
\Omega(\epsilon_0)=\{x: |x|_{g'}\leq 6R \qquad d_{g'}(x,\tilde{\gamma})\leq \epsilon_0\}
\end{align}
as a tubular neighborhood of $\tilde{\gamma}$ with radius $\epsilon_0$ in the ball $\{|x|_{g'}\leq 6R\}$. Choosing $\epsilon_0$ sufficiently small we can uniquely define $s=s(x)$ for all $x\in \Omega(\epsilon_0)$ such that $x(s)$ is the closest point on $\tilde{\gamma}$ to $x$ provided $\tilde{\gamma}$ has no self-intersections. The variable $s$ is the analogue of the time variable for time dependent problems.

We now define a cutoff function $\phi_{\lambda}(x)\in C^{\infty}(\mathbb{R}^d)$ for $\lambda>0$ such that
\begin{align}\label{bye}
\phi_{\lambda}(x)=\left\{
\begin{array}{lr} 0 \quad \mathrm{if} \quad x\in \Omega^c(\lambda^{-1/2d}) \\
1 \quad \mathrm{if} \quad x\in \Omega(\lambda^{-1/2d})
\end{array}
\right.
\end{align}
One can arrange that there is a constant $C$ such that
\begin{align}
\sup_{x\in M}|\nabla_x^m\phi_{\lambda}|\leq C\lambda^{-m}.
\end{align}
We drop the subscript $\lambda$ for the rest of this paper. We used the fact that $\lambda^{-1}<\epsilon_0$ so the definition of the parameter $s$ makes sense as above. 

We also recall the following result:
\begin{lem}[{\cite[Corollary~5]{waterss}}]\label{expsize}
Let $\psi(x)$ be the phase function of a zeroth order beam. We have
\begin{align*}
\exp(-2\lambda\Im\psi(x))\sim\exp(-\lambda Cd_{g'}(x,x(s))^2),
\end{align*}
where $C$ is independent of $\lambda$. 
\end{lem}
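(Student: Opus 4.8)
The plan is to reduce the statement to a two-sided comparison between the quadratic form $\Im\mathcal{M}(s)$ and the squared distance $d_{g'}(x,x(s))^2$, and then exponentiate. First I would isolate the imaginary part of the phase. In the expression \eqref{SMA} for $\psi$, the term $S(s)$ (which solves $d_sS=2n^2(x(s))$ with $S(0)=0$), the bicharacteristic $x(s)$, and the momentum $p(s)$ are all real-valued, so the only imaginary contribution comes from $\mathcal{M}(s)$, giving
\begin{align*}
\Im\psi(x)=\tfrac12 (x-x(s))\cdot \Im\mathcal{M}(s)(x-x(s)).
\end{align*}
Hence $2\lambda\Im\psi(x)=\lambda\,(x-x(s))\cdot \Im\mathcal{M}(s)(x-x(s))$, and the whole problem becomes showing that $(x-x(s))\cdot\Im\mathcal{M}(s)(x-x(s))\sim d_{g'}(x,x(s))^2$ with constants independent of $\lambda$.

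The key geometric input is the way $s=s(x)$ is defined: it is the parameter of the closest point $x(s)$ on $\tilde\gamma$ to $x$, so $x-x(s)$ is $g'$-orthogonal to the ray direction $\dot x(s)$. Because the displacement lies in $\dot x(s)^{\perp}$, I can invoke the property recorded after \eqref{ice} that $\Im\mathcal{M}(s)$ is positive definite on the orthogonal complement of $\dot x(s)$. This is exactly the source of the transverse Gaussian decay of the beam: on the relevant subspace the form is strictly positive, even though $\Im\mathcal{M}(s)$ need not be positive in the ray direction itself.

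Next I would upgrade this pointwise positivity to a uniform estimate. Since the ray segment $\{x(s):0\le s\le T\}$ is compact and $s\mapsto\mathcal{M}(s)$ is continuous (being the solution of the matrix Riccati equation with initial data \eqref{ice}), the smallest and largest eigenvalues of the restriction of $\Im\mathcal{M}(s)$ to $\dot x(s)^{\perp}$ are bounded below and above by positive constants uniformly in $s$. This gives $c|x-x(s)|^2\le (x-x(s))\cdot\Im\mathcal{M}(s)(x-x(s))\le C|x-x(s)|^2$ for $x-x(s)\perp\dot x(s)$ in the Euclidean norm. Finally, because $g'$ agrees with the Euclidean metric outside $M$ and is a smooth metric on the compact set $\{|x|_{g'}\le 6R\}$, its metric tensor has eigenvalues bounded above and below there, so $|x-x(s)|\sim d_{g'}(x,x(s))$ for points in the tube $\Omega(\lambda^{-1/2d})$. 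Chaining these comparisons yields $c'd_{g'}(x,x(s))^2\le 2\Im\psi(x)\le C'd_{g'}(x,x(s))^2$, and multiplying by $-\lambda$ and exponentiating gives the claimed equivalence of Gaussians.

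The main obstacle is the transversality step: the positive definiteness of $\Im\mathcal{M}(s)$ holds only on $\dot x(s)^{\perp}$, so the argument genuinely relies on $x-x(s)$ being orthogonal to the ray, which in turn rests on the closest-point construction of $s(x)$. Care is needed because the orthogonality produced by distance minimization is with respect to $g'$, whereas the positivity statement is most naturally phrased against a fixed background inner product; reconciling these requires the equivalence of the two metrics on the compact tube and a check that the constants $c',C'$ are uniform in $s$ and independent of $\lambda$. A secondary point is that the identification $s=s(x)$, and hence the exact transversality, is valid only within $\Omega(\epsilon_0)$; outside the effective support of the beam the estimate is vacuous since both sides are exponentially small, so it suffices to restrict attention to the tube where the cutoff $\phi$ lives.
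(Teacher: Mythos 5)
Your proof is correct and follows essentially the same route as the paper, whose own argument is a one-line observation that $\mathcal{M}(s)$ is bounded with positive definite imaginary part and that the form of $\psi$ then gives the claim. You fill in the details the paper leaves implicit --- in particular the point that $\Im\mathcal{M}(s)$ is positive definite only on $\dot{x}(s)^{\perp}$ and that the closest-point definition of $s(x)$ supplies the needed transversality of $x-x(s)$ --- which is a genuine subtlety the paper's terse proof glosses over.
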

Let $B$ denote the set 
\begin{align*}
B=\{x: d_{g'}(x,x(s))>\lambda^{-(\frac{1}{2}-\sigma)},\,\,\, 0\leq s\leq 6R\},  \qquad \sigma>0,\ \ \sigma \in\mathbb{R}.
\end{align*}
We conclude that since $2\Im\psi(s,x)\sim d_{g'}(x,x(s))^2$, $\exp(-2\lambda\Im\psi(x))$ is exponentially decreasing in $\lambda$ for all $x\in B$. 
Notice that we are taking more care to construct the cutoff functions than in~\cite{olof}, as they are crucial for the phaseless measurements. 
\begin{proof}
We need only observe that $\mathcal{M}(s)$ is a bounded and positive definite matrix. From the form of the phase functions constructed the desired result follows. 
\end{proof}
The construction of the localized cutoff finishes the construction of $U_{\lambda}$. A standard argument gives that $U_{\lambda}$ extends across each local coordinate chart to cover the ray $x(s)$ iteratively, c.f.~\cite[Section~7]{ks}. 
\end{proof}

\section{Introduction of the Source Terms}

We now introduce source functions. We summarize the results from \cite{olof} to show we can build such sources. We claim:
\begin{thm}
There exists a Gaussian beam solution which solves the equation~\eqref{acoustic} and is found by solving the Dirichlet problem on one side of hyper-planes which contain a source point.  
\end{thm}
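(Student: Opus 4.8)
The plan is to construct the desired Gaussian beam source by splitting space along a hyperplane through the source point $x_0$ transverse to the initial direction $\omega_0$, solving a half-space Dirichlet problem for the free Helmholtz operator, and reading off the boundary trace as the source $h^{x_0,\omega_0}$. Concretely, I would take the zeroth order beam $U_\lambda$ produced in Theorem~\ref{lma:gaussian}, which already solves \eqref{acoustic} with no source up to the stated error, and restrict attention to the hyperplane $\Sigma=\{x:(x-x_0)\cdot\omega_0=0\}$ on which $\chi_\lambda$ is supported. The beam $U_\lambda$ is well defined on the side of $\Sigma$ into which $\omega_0$ points; on that half-space it is an outgoing solution concentrated along the ray $\tilde\gamma$. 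The claim is that $U_\lambda$ is the unique solution of the Dirichlet problem $(\Delta_g+(\lambda^2+i\alpha\lambda)n^2)U=0$ on the half-space with boundary data $U_\lambda|_\Sigma$, and that extracting the source amounts to applying the operator $L$ to the globally-defined ansatz and collecting the terms supported near $\Sigma$.

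First I would write $U_\lambda=\phi(x)A(x,\lambda)\exp(i\lambda\psi(x))$ with $\phi$ the cutoff from \eqref{bye} and $A=a_0+\mathcal O(d_{g'})$, and compute $LU_\lambda$ directly. By construction the phase solves the eikonal equation to the required order and the amplitude solves the transport equations, so all interior contributions from the $c_j$ in \eqref{eikonal} are absorbed into the $\mathcal O(\lambda^{-1})$ error; the only surviving leading term comes from differentiating the cutoff $\phi$ across $\Sigma$, i.e. from commutators $[\Delta_g,\phi]$. This produces a term proportional to $\nabla_g\phi\cdot\nabla_g(A\exp(i\lambda\psi))$, whose leading factor is $2i\lambda\,\nabla_g\psi\cdot\nabla_g\phi$. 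Evaluating on $\Sigma$, using $\nabla\psi(x_0)=\omega_0$ and $D^2\psi(x_0)=\mathcal{M}(0)$ from \eqref{SMA}, and expanding $\tilde\psi(x)=(x-x_0)\cdot\omega_0+\tfrac12(x-x_0)\cdot\mathcal{M}(0)(x-x_0)$ as in the statement, I would match this to the prescribed form
\begin{align}
h^{x_0,\omega_0}(x,\lambda)=2i\lambda\chi_\lambda(x)\left(\omega_0\cdot\nu+\mathcal{M}(0)(x-x_0)\cdot\nu+\nabla_g\chi_\lambda\cdot\nu\right)\exp(i\lambda\tilde\psi(x)),
\end{align}
where $\nu$ is the normal to $\Sigma$; the three terms in the parenthesis correspond exactly to the leading gradient $\omega_0$, the first-order correction $\mathcal{M}(0)(x-x_0)$, and the derivative of the amplitude cutoff hitting $\nu$.

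Having identified the trace, I would invoke existence and uniqueness for the half-space Dirichlet problem: since $M'$ is simple with respect to $n^{-2}g'$ and the data is supported in the small cap $\{|x-x_0|<\lambda^{-1/2d}\}$, the forward problem for $L$ with this boundary data has a unique outgoing solution, and the Gaussian beam $U_\lambda$ realizes it up to the controlled error. This is essentially the construction of~\cite{olof}, so I would cite it for the solvability and quote the trace computation above to verify the explicit form of the source. The main obstacle I anticipate is bookkeeping the cutoff commutator terms cleanly: one must check that differentiating $\phi$ does not generate lower-order-in-$\lambda$ contributions that fail to vanish on $\Sigma$, and that the amplitude-gradient term $\nabla_g\chi_\lambda\cdot\nu$ genuinely appears at the stated order rather than being swamped by the $2i\lambda\,\omega_0\cdot\nu$ term. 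Controlling these requires the sharp derivative bounds $\sup_M|\nabla^m\phi_\lambda|\le C\lambda^{-m}$ together with Lemma~\ref{expsize}, so that away from $\Sigma$ every cutoff-derivative term is exponentially small in $\lambda$ and only the boundary trace on $\Sigma$ contributes.
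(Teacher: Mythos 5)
There is a genuine gap in your mechanism for producing the source. The cutoff $\phi$ from \eqref{bye} is a \emph{tubular} cutoff: it equals $1$ on a tube $\Omega(\lambda^{-1/2d})$ around the entire ray $\tilde\gamma$ and vanishes outside a slightly larger tube, so $\nabla_g\phi$ is supported on the lateral boundary of the tube, where Lemma~\ref{expsize} makes the beam exponentially small. The commutator $[\Delta_g,\phi]$ therefore contributes only to the negligible error $f_{gb}$; it is not supported near $\Sigma$ and cannot generate the surface source $h^{x_0,\omega_0}\,\delta(\rho)$. If instead you make the restriction to one side of $\Sigma$ sharp (multiply by $H(\rho)$), then $\Delta_g$ applied to $H(\rho)U_\lambda$ produces not only a $\delta(\rho)$ term from the normal derivative but also a $\delta'(\rho)$ term from the nonzero trace $U_\lambda|_\Sigma$, and that term cannot be matched to the prescribed form \eqref{source}, which is a function times $\delta(\rho)$.

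The idea you are missing is the two-sided construction the paper uses: one builds \emph{two} beams, an ingoing $U^+$ on $\{\rho\geq 0\}$ and an outgoing $U^-$ on $\{\rho\leq 0\}$ (the latter obtained by solving the Dirichlet problem on the other side with data $U^+|_\Sigma$, matching the Taylor series of $\psi^\pm$ and $A^\pm$ on $\Sigma$), each extended by zero across $\Sigma$, and sets $U=U^++U^-$. Since the traces agree, the $\delta'(\rho)$ contributions cancel; since the normal derivatives of the phases have opposite signs, the jump $\partial_\nu\psi^+-\partial_\nu\psi^-\approx 2(\omega_0\cdot\nu+\mathcal{M}(0)(x-x_0)\cdot\nu)$ survives and, as in \eqref{diff}, yields exactly the factor $2i\lambda(\omega_0\cdot\nu+\mathcal{M}(0)(x-x_0)\cdot\nu+\nabla_g\chi_\lambda\cdot\nu)$ multiplying $\delta(\rho)$. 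Your trace computation identifying the three terms in the parenthesis is essentially right once it is attached to this jump rather than to the cutoff commutator, and your appeal to \cite{olof} for solvability is in the right spirit; but without the second beam the argument does not produce a source of the stated form.
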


This argument is similar to ~\cite[Section~2.1]{olof} and is repeated for completeness.
We let $\rho$ be a function such that $|\nabla_g\rho|=1$ on the hypersurface $\Sigma=\{x: \rho(x)= 0\}$.
Let $x_0$ be a point in $\Sigma$ and we let $(x(s),p(s))$ be the solution path --- in other words the null-bicharacteristics with $(x(0),p(0))=(x_0,n(x_0)\nabla_g\rho(x_0))$.
The hypersurface $\Sigma$ is given by $s=\sigma(y)$ with $\sigma(0)=0$ and $\nabla\sigma(0)=0$ where $x=(s,y)$ and $y=(y_1,. . .,y_{d-1})$ is transversal.
We let the optics Ansatz $U(x)$ have initial data $(x(0),p(0))$, and be defined in this tubular neighborhood.
We let $U^+$ be the restriction of $U$ to $\{x: \rho(x)\geq 0\}$.
Because we need to have a source term which is a multiple of $\delta(\rho)$ , we also need a second, `outgoing' solution $U^-$, defined on $\{x: \rho(x)\leq 0\}$. It is then equal to $U^+$ on the hypersurface $\Sigma$.
For example, we can write the ingoing and outgoing optics solutions as
\begin{align}
 U^+=A^+(x,\lambda)\exp(i\lambda\psi^+(x)), \qquad U^-=A^-(x,\lambda)\exp(i\lambda\psi^-(x)),
\end{align}
where \cite{olof} have set $\psi^+=\psi^-$ and $A^+=A^-$ on $\Sigma$. The requirement that their Taylor series coincide on the boundary is equivalent to setting $\partial_y^\alpha|_{y=0} \psi^+(\sigma(y))=\partial_y^\alpha|_{y=0} \psi^-(\sigma(y))$. We extend $U^+$ to be $0$ on $\{x: \rho(x)<0 \}$ and $U^-$ to be $0$ on $\{x:\rho(x)>0 \}$. We define our geometric optics Ansatz solution $U$ to be $U=U^++U^-$. We set $A=A^+=A^-$ on $\Sigma$. In order to add the source term, we notice that 
\begin{align}\label{diff}
&LU=i\lambda\sparen{\sparen{\frac{\partial\psi^+}{\partial\nu}-\frac{\partial\psi^-}{\partial\nu}}A(x,\lambda)+\frac{\partial A^+}{\partial\nu}(x,\lambda)-\frac{\partial A^-}{\partial\nu}(x,\lambda)}\exp(i\lambda\phi^+)\delta(\rho))\\&+f_{gb}= \nonumber
g_0\delta(\rho)+f_{gb}, \nonumber
\end{align}
where $\nu(x)=\nabla \rho(x)$ is the unit normal to $\Sigma$. We consider the singular part $g_0\delta(\rho)=h(x,\lambda)$ to be the source term and $f_{gb}$ the error. Then we obtain
\begin{align}
f=\exp(i\lambda\psi^+)\sum\limits_{j=-2}^lc_j^+(x)\lambda^{-j}+\exp(i\lambda\psi^-)\sum\limits_{j=-2}^lc_j^-(x)\lambda^{-j},
\end{align}
where we extend $c_j^+$ to be zero for $\rho(x)<0$ and $c_j^-$ to be zero for $\rho(x)>0$. We know by construction that $c_{-2}^{\pm}=\mathcal{O}(d_g(x,x(s))^3)$ and $c_{-1}^{\pm}=\mathcal{O}(d_g(x,x(s)))$, respectively for $1^{st}$ order beams. 

By construction we see that $h(x,\lambda)$ as defined in \eqref{source} satisfies \eqref{diff}, with $\rho(x)$ a boundary defining function. The incoming and outgoing Gaussian beams are matched at the boundary of $M$ by construction. 

\begin{rmk}
 While this procedure may seem backwards and a bit ad hoc, it is useful for deriving good error estimates which are needed for the use of the embedding theorems. 
\end{rmk}

\section{Error Estimates}

We claim for $N^{th}$ order Gaussian beams:
\begin{lem}
We have the following estimate for the Gaussian beam error:
\begin{align}
&\norm{f_{gb}(x)}^2_{H^m(|x|_{g'}<R)}\leq C\lambda^{-N+2+(1-d)/2+2m}.
\end{align}
with $C$ independent of $\lambda$ depending on then $C^{N+m}(M')$ norm of both the initial data and $\tilde{n}^2(x)$. 
\end{lem}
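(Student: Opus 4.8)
The plan is to expand $f_{gb}=\exp(i\lambda\psi)\sum_{j=-2}^{l}c_j(x)\lambda^{-j}$ and estimate each term separately, playing off the two competing effects built into the construction: the coefficients $c_j$ vanish to high order on the central ray, while the Gaussian factor $\exp(-2\lambda\Im\psi)$ concentrates everything into a tube of transverse width $\lambda^{-1/2}$. By the recursive choice in \eqref{eikonal} and the vanishing prescription for an $N$th order beam, on the support of $\phi$ one has $\abs{c_j(x)}\leq C\,d_{g'}(x,x(s))^{N-2j-2}$ for $-2\leq j\leq l-1$, with $C$ controlled by the $C^{N+m}(M')$ norms of $n_i^2$ and the initial amplitude; the top term $j=l$ is merely $\mathcal{O}(1)$ but carries the factor $\lambda^{-l}$ with $2l\geq N-2$.

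First I would treat the case $m=0$. Using Lemma~\ref{expsize} to replace $\exp(-2\lambda\Im\psi)$ by $\exp(-\lambda C\,d_{g'}(x,x(s))^2)$, the $L^2$ mass of the $j$th term is bounded by $\lambda^{-2j}\int d_{g'}(x,x(s))^{2(N-2j-2)}\exp(-\lambda C\,d_{g'}(x,x(s))^2)\,dx$. Splitting this into the arclength variable $s$ (ranging over a length $\sim \mathrm{diam}_H(M)$, contributing only a constant) and the $(d-1)$ transverse directions, the rescaling $u=\sqrt\lambda\,d_{g'}(x,x(s))$ turns the transverse Gaussian integral into $\lambda^{-((N-2j-2)+(d-1)/2)}$ times a dimensional constant. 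Combining with $\lambda^{-2j}$ gives the exponent $-2j-(N-2j-2)-(d-1)/2=-N+2+(1-d)/2$, which is \emph{independent of $j$}; the top term contributes no more than this since $2l\geq N-2$. Summing the finitely many terms yields $\norm{f_{gb}}_{L^2}^2\leq C\lambda^{-N+2+(1-d)/2}$.

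For general $m$ the key point is that differentiation costs at most one power of $\lambda$ per derivative. Since $\nabla(\lambda\psi)=\lambda(p(s)+\mathcal{M}(s)(x-x(s)))$ is bounded by $C\lambda$ on the bounded tube, each derivative landing on the exponential produces a factor $\mathcal{O}(\lambda)$, contributing $\lambda^2$ to the squared norm; by contrast, a derivative landing on $c_j$ only lowers its vanishing order by one, which improves the transverse Gaussian integral by a single power of $\lambda$ and hence yields strictly less than the phase contributes, and the same is true for derivatives of $\phi$. The worst case is therefore all $m$ derivatives striking the phase, giving $\abs{\partial^\alpha f_{gb}}^2\lesssim \lambda^{2m}\abs{f_{gb}}^2$ up to subleading terms, and integrating reproduces the $m=0$ bound multiplied by $\lambda^{2m}$, i.e.\ $\lambda^{-N+2+(1-d)/2+2m}$. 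The need to differentiate $c_j$ up to $N+m$ times is exactly the source of the claimed $C^{N+m}(M')$ dependence of the constant.

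The contributions supported on $\mathrm{supp}\,\nabla\phi$ must be checked separately, and this is the only place where the precise cutoff width $\lambda^{-1/2d}$ enters: there $d_{g'}(x,x(s))\sim\lambda^{-1/2d}$, so $\exp(-\lambda C\,d_{g'}(x,x(s))^2)\sim\exp(-C\lambda^{1-1/d})$ decays faster than every power of $\lambda$, and these terms are absorbed into the error for free. The main obstacle, and the step deserving the most care, is the bookkeeping in the $H^m$ estimate: one must verify uniformly in $j$ that mixing phase-, coefficient-, and cutoff-derivatives never beats the clean bound $\lambda^{2m}\norm{f_{gb}}_{L^2}^2$, and that the Taylor remainder controlling $c_j$ off the ray holds in the stated $C^{N+m}$-dependent form uniformly along the whole ray $\{x(s):0\le s\le T\}$.
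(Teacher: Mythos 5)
Your proposal is correct and follows essentially the same route as the paper: term-by-term estimation using the prescribed vanishing order $d_{g'}(x,x(s))^{N-2j-2}$ of the $c_j$, trading that polynomial vanishing against the Gaussian concentration to get a $j$-independent exponent, and then paying $\lambda^{2}$ per derivative for the $H^m$ norm. The only cosmetic difference is that you convert $d^{N-2j-2}$ into powers of $\lambda^{-1/2}$ by rescaling inside the transverse integral, whereas the paper does it pointwise via the elementary inequality $b^p e^{-ab^2}\leq C_p a^{-p/2}e^{-ab^2/2}$ before integrating over the tube; these are the same computation.
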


\begin{proof}
We have that the $c_j(x)^{\pm}$ are bounded and 
\begin{align}
c_{j}^{\pm}(x)=\sum\limits_{|\beta|=N-2j-2}d_{\beta,j}^{\pm}d_{g'}(x,x(s))^{\beta}, \qquad j=-2, . . . ,l-1,
\end{align}
where the $d_{\beta,j}$ are bounded by Taylor's theorem. We obtain
\begin{align}
|c_j^{\pm}(x)|_{g'}\leq C_jd_{g'}(x,x(s))^{N-2j-2}|\phi(x)|_{g'},
\end{align}
with the $C_j$ uniformly bounded independent of $\lambda$. As $\mathcal{M}(s)$ is a matrix with positive definite imaginary part,
\begin{align}
\mathrm{Im}\psi^{\pm}(x)\geq Cd_{g'}(x,x(s))^2
\end{align}
for some positive constant $C$. From the elementary inequality~\cite{olof} for $a,b>0$,
\begin{align}
b^p\exp(-ab^2)\leq C_pa^{-p/2}\exp(-ab^2/2), \qquad C_p=\sparen{\frac{p}{e}}^{\frac{p}{2}},
\end{align}
with $p=N-2j-2$, $a=\lambda c$ and $b=d_{g'}(x,x(s))$, $x\in \Omega(\lambda^{-1/2})$, we obtain
\begin{align}
|f_{gb}(x)|_{g'}\leq \exp(-\lambda\Im\psi^{\pm}(x))\sum\limits_{j=-2}^l|c_j(x)|_{g'}\lambda^{-j}.
\end{align}
see \cite{LUT}. It follows that
\begin{align*}
&|f_{gb}(x)|_{g'}\leq \exp(-\lambda cd_{g'}(x,x(s))^2)\sum\limits_{j=-1}^lC_jd_{g'}(x,x(s))^{N-2j-2}\lambda^{-j}\leq \\& C\exp(-\lambda cd_{g'}(x,x(s))^2/2)\sum\limits_{j=-2}^l\lambda^{-N/2+j+1}\lambda^{-j}\leq C\exp(-\lambda cd_{g'}(x,x(s))^2/2)\lambda^{-N/2+1} 
\end{align*}
with $C$ a generic constant independent of $\lambda$ depending on then $C^{N+m}(M')$ norm of both the initial data and $\tilde{n}^2(x)$. If we differentiate $c_j^{\pm}(x)$ and $\exp(i\lambda\psi(x))$, then we have for the $N^{th}$ order beams
\begin{align}\label{comperror}
&\norm{f_{gb}(x)}^2_{H^m(|x|_{g'}<R)}\leq \\& \qquad \qquad \quad C\lambda^{-N+2+2m}\int\limits_{\mathclap{x\in \Omega(\lambda^{-1/2})}}\exp(-2\lambda Cd_g(x,x(s))^2)\,dx \leq C\lambda^{-N+2+(1-d)/2+2m},
\nonumber
\end{align}
as claimed since $\Omega(\lambda^{-1/2d})\subset \Omega(\lambda^{-1/2})$ and the integrand is positive. 
\end{proof}

\begin{rmk}
Notice that this error now includes a function which is localized in a neighborhood of $\mathcal{O}(1/\sqrt{\lambda})$, which is more precise than in~\cite{olof}. The reason for the precise $\phi_{\lambda}$ is twofold: We need a localized solution for~\eqref{mainsup} to hold, and we are also considering a bounded domain. 
\end{rmk}

\section{Extension of the Ansatz}
The extension of the Gaussian beam Ansatz to $M'$ will provide us with the means to make observations further away from the source and still identify source terms. In the last section we built an Ansatz which is a subset of a slightly extended version of $M$. We view the manifold $M$ as an embedded subset of $\mathbb{R}^d$, so we have the following inclusions: 
\begin{align}
\overline{M}\subset \overline{M'}\subset \{x: |x|_{g'}<6R\}.
\end{align}
for $R$ sufficiently large. We briefly review the results of~\cite{olof}. We set
\begin{align}
\tilde{f}_u=LU-g_0\delta(\rho).
\end{align}
We would like this function to be supported in $|x|_{g'}<6R$ and be $\mathcal{O}(\lambda^{-1})$. We let $G_{\lambda}(x)$ be the Green's function for the Helmholtz operator $L$.
In order to extend our approximate solution $U$, we introduce a smooth cutoff $\chi_a(x)$ such that
\begin{align*}
&\chi_a(x)=1  \quad \text{for}\ \ |x|_{g'}<(a-1)R,\\&
\chi_a(x)=0 \quad \text{for}\ \ |x|_{g'}>aR.
\end{align*}
Now we set
\begin{align}
\tilde{U}(x)=\chi_3(x)U(x)+\int\limits G_{\lambda}(x-y)\chi_5(y)L[(1-\chi_3(y))U(y)]\,dy.
\end{align}
In~\cite{olof}, they prove 
\begin{align}\label{error2}
\norm{U-\tilde{U}}_{H^m(|x|_{g'}<6R)}=\mathcal{O}(\lambda^{-n}).
\end{align}
 Therefore the size of $\tilde{U}$ and the extension depends on the size of $f_{gb}$. Furthermore, using ideas in Vainberg~\cite{vainberg}, they also prove
\begin{align}\label{error1}
\norm{U-u}_{H^m(|x|_{g'}<6R)}\leq C\lambda^{-1}\norm{f_{gb}}_{H^m(|x|_{g'}<R)}.
\end{align}
The triangle inequality allows us to conclude
\begin{align}\label{error3}
\norm{u-\tilde{U}}_{H^m(|x|_{g'}<6R)}\leq C\lambda^{-1}\norm{f_{gb}}_{H^m(|x|_{g'}<R)}.
\end{align} 
The proofs done in \cite{olof} are for when $(M,g)$ has the Euclidean metric, but can be easily extended as long as $M$ is simple with respect to $g$. 

In conclusion, we have that:
\begin{lem}
There exists an extension of the Gaussian beam solution of the problem~\eqref{acoustic} to the whole space such that
\begin{align}
\norm{u-\tilde{U}}_{H^m(M')}\leq C\lambda^{-1}\norm{f_{gb}}_{H^m(|x|_{g'}<R)}.
\end{align} 
\end{lem}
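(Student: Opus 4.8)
The plan is to recognize that this lemma is essentially a repackaging of the estimate~\eqref{error3} already in hand, now stated over the intermediate manifold $M'$ rather than over the full ball $\{x:|x|_{g'}<6R\}$. The existence of the extension $\tilde{U}$ requires no new construction: it is precisely the function produced by the explicit Green's function formula $\tilde{U}(x)=\chi_3(x)U(x)+\int G_{\lambda}(x-y)\chi_5(y)L[(1-\chi_3(y))U(y)]\,dy$ of the preceding section. The two ingredients that control its distance to the true solution $u$ — the cutoff/Green's function bound~\eqref{error2} and the Vainberg-type resolvent estimate~\eqref{error1} — have already been established, so all that remains is to transfer~\eqref{error3} onto the domain $M'$ used in the rest of the paper.

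The single substantive step is a domain comparison. The chain of inclusions $\overline{M}\subset\overline{M'}\subset\{x:|x|_{g'}<6R\}$ exhibits $M'$ as a Riemannian subdomain of the ball on which~\eqref{error3} lives. Since the $H^m$ norm is computed by integrating $\sum_{|\alpha|\leq m}\abs{D^\alpha(u-\tilde{U})}_{g'}^2$ against the volume form of $g'$, and this integrand is nonnegative, shrinking the domain of integration from $\{x:|x|_{g'}<6R\}$ to $M'$ can only decrease the value. Hence $\norm{u-\tilde{U}}_{H^m(M')}\leq\norm{u-\tilde{U}}_{H^m(|x|_{g'}<6R)}$, and chaining this with~\eqref{error3} yields the claimed bound with the same constant $C$. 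No absolute constant is lost in the restriction, so the dependence of $C$ on $\mathrm{diam}_H(M)$ and the relevant $C^{N+m}$ norms is inherited unchanged.

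The only point that warrants attention — and where the earlier hypotheses do real work — is that $u$ and $\tilde{U}$ genuinely be defined and lie in $H^m$ across all of $M'$, in particular where the central ray passes between coordinate charts. Here I would invoke the simplicity of $M'$ with respect to $n^{-2}g'$: simplicity guarantees that $\tilde{\gamma}$ has no conjugate points and no self-intersections, so the parameter $s=s(x)$ and the tubular neighborhood $\Omega(\epsilon_0)$ are unambiguously defined, and the iterative chart-by-chart extension of $U_\lambda$ (as in~\cite[Section~7]{ks}) covers all of $M'$. I expect no real obstacle here, only bookkeeping — the genuinely delicate analytic estimates were already absorbed into~\eqref{error2} and~\eqref{error1}, so this final lemma is the clean corollary that packages them on the domain $M'$ required in the sequel.
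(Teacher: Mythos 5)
Your proposal matches the paper's own (very brief) justification: the lemma is obtained from~\eqref{error3} by noting that $M'$ sits inside the ball $\{x:|x|_{g'}<6R\}$, so restricting the domain of the $H^m$ norm only decreases it, with $\tilde{U}$ being exactly the Green's-function extension already constructed. Your added remark on monotonicity of the norm under restriction is the same observation the paper makes when it says the two norms are ``essentially the same'' for $R$ large, so this is essentially the same argument.
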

Here we know $\tilde{U}$ is supported in $|x|<6R$ and we know $u$ is in $H^m(\mathbb{R}^d)$ with $g'$ metric so that for $R$ sufficiently large, $\norm{u-\tilde{U}}_{H^m(M')}$ is essentially the same as $\norm{u-\tilde{U}}_{H^m(|x|_{g'}<6R)}$. 

\section{Integral Transforms Generated by Hamiltonian Flows}

We now consider the problem of recovering a function stably from its integrals over the integral curves of a Hamiltonian flow.
A related nonlinear problem was considered in~\cite{AZ:em-rigidity}, and generalizations of the geodesic X-ray transform problem have been studied before (see e.g.~\cite{AD:surface,FSU:general-x-ray,R:radon-cormack-type}). However, we are not aware of any discussion of the present problem in the literature.

Let $(M,g)$ be a compact Riemannian manifold with boundary and $q\colon M\to\R$ a smooth function. We let $\Omega$ denote a magnetic field which is a closed $2$-form. We consider the law of motion governed by Newton's equation:
\begin{align}\label{mp}
\nabla_{\dot{\gamma}}\dot{\gamma}=Y(\dot{\gamma})-\nabla q(\gamma)
\end{align}
We have $Y:TM\rightarrow TM$ is the Lorentz force which is associated to $\Omega$, which is the map uniquely determined by 
\begin{align}
\Omega_{x}(\xi,\eta)=\langle Y_x(\xi),\eta\rangle
\end{align}
$\forall x\in M, \xi,\eta\in T_xM$. The curve which satisfies \eqref{mp} is called an $\mathcal{MP}$ geodesic as per \cite{AZ:em-rigidity}. The $\mathcal{MP}$ stands for Maupertius principle.  The equation \eqref{mp} defines a flow $\phi_t$ on $TM$. which we call an $\mathcal{MP}$ flow. Whenever $\Omega=0$ the flow is the potential flow and this corresponds to the Hamiltonian flow, corresponding to the Hamiltonian function $H\colon T^*M\to\R$ given by $H(x,p)=\frac12g_{ij}(x)p^ip^j+q(x)$ in local coordinates. For our purposes it is sufficient to examine potential flows, but our results could extend to situations with a magnetic term included, in which case the Hamiltonian would be $H(x,p)=|p+\alpha|^2_g+q(x)$, $\Omega=d\alpha$ for $\Omega$ exact. 

We refer to the curves generated by the Hamiltonian flow as $H$-geodesics. For an $\mathcal{MP}$ flow, the energy $\mathcal{E}(x,\xi)=\frac{1}{2}|\xi|_g^2+q(x)$ is an integral of motion. The law of conservation of energy says for all $\mathcal{MP}$ geodesics the energy is constant along the geodesic. 

The $\mathcal{MP}$ flow depends on the choice of energy level.  Consider an $H$-geodesic $\gamma\colon [0,T]\to M$.
The value of $H(\gamma(t),\dot\gamma(t))$ is independent of $t$, and we write $H_0$ for this constant.
 We assume that $H_0>\max\limits_{x\in M}q$ so we can set $S_{H_0}M=\mathcal{E}^{-1}(H_0)$, the bundle with energy $H_0$ over $M$. It is then necessary for $H_0$ to be larger than $\max\limits_{x\in M}q(x)$. Otherwise we get as $x\in M$ any vector $\xi\in S_{H_0}M$ has non-positive length. We let $\nu(x)$ the inward normal to $\partial M$ at $x$. We let the set of vectors
\begin{align}
\partial_+S_{H_0}M=\{(x,\xi)\in S_{H_0}M: \langle \xi,\nu\rangle \geq 0\}
\end{align}
For $(x,\xi)\in S_{H_0}M$ let $\tau(x,\xi)$ be the time such that the $\mathcal{MP}$ geodesic with $\gamma(0)=x, \dot{\gamma}(0)=\xi$ reaches $\partial M$. The function $\tau(x,\xi): S_{H_0}M\rightarrow \mathbb{R}$ is smooth by Lemma A.5 of \cite{AZ:em-rigidity}. 

Let $\Lambda(\cdot,\cdot)$ denote the $2^{nd}$ fundamental form of $\partial M$. The boundary of $\partial M$ is strictly $\mathcal{MP}$ convex if
\begin{align}
\Lambda(x,\xi) > \langle Y_x(\xi),\nu(x)\rangle -d_xq(\nu(x))
\end{align}
for all $(x,\xi)\in \partial_+S_{H_0}M$. 

For all $x\in M$, the $\mathcal{MP}$ exponential map at $x$ is the partial map
\begin{align}
\exp_x^{\mathcal{MP}}: T_xM\rightarrow M
\end{align}
which is given by the following:
\begin{align}
\exp_x^{\mathcal{MP}}(t\xi)=\pi\circ\phi_t(\xi) \quad t\geq 0,\,\, (x,\xi)\in S_{H_0}M
\end{align}
For all $x\in M$, $\exp_x^{\mathcal{MP}}$ is a $C^1$ smooth partial map and $T_xM$ is $C^{\infty}$ smooth on $T_xM/\{0\}$. We let $\exp_x^{H}$ denote the exponential map associated to the potential Hamiltonian ($\Omega=0$).  In analogy to the usual exponential map $\exp_x\colon T_xM\to M$, we have the Hamiltonian exponential map $\exp^H_x\colon T_xM\to M$ for each $x\in M$, defined by the Hamiltonian flow. 

Since our manifold has a boundary, this is only a partial map.
We say that the manifold is simple at energy $H_0$ if the map $\exp^H_x\colon (\exp^H_x)^{-1}(M)\to M$ is a diffeomorphism for each $x\in M$, and also that $\partial M$ is strictly $\mathcal{MP}$ convex.  We remark that simplicity implies $H_0>\max\limits_{x\in M}q$. It is more stringent to assume a system to be simple than to assume it to be non-trapping.

We define the Hamiltonian flow transform of a function $f\colon M\to\R$ at energy $H_0$ for $(x,\xi)\in \partial_+S_{H_0}M$ as
\begin{align}
&\rt_{H_0}f(\gamma)
=\int_0^{\tau(x,\xi)}f(\gamma(t)) \der t,\\&
I_{H_0}f: C^{\infty}(M)\rightarrow C^{\infty}(\partial_+S_{H_0}M)
\end{align}
where $\gamma$ is an $H$-geodesic of energy $H_0$.
We denote the corresponding normal operator by $N_{H_0}=\rt_{H_0}^*\rt_{H_0}$, with the adjoint operator
\begin{align}
I^*_{H_0}f: C^{\infty}(\partial_+S_{H_0}M)\rightarrow C^{\infty}(M).
\end{align}

For a fixed energy level $H_0$ we let $\gamma(t)$ be an $\mathcal{MP}$ geodesic. We consider the change of variables
\begin{align}
s(t)=\int\limits_0^t2(H_0-q(\gamma(t))\,dt.
\end{align}
It follows that $s$ is the arc-length of $\sigma(s)=\gamma(t(s))$ under the metric $\tilde{g}=2(H_0-q)g$. A version of Maupertuis principle says that $\sigma(s)=\gamma(t(s))$ is a unit speed magnetic geodesic of the system $(\tilde{g},0)$. We quote these results from \cite{AZ:em-rigidity} without proof. 

\begin{thm}[Theorem 2.1 in \cite{AZ:em-rigidity}]
Let $(M,g,q)$ denote an $\mathcal{MP}$ potential system on $M$ and $H_0$ a constant such that $H_0>\max\limits_{x\in M} q(x)$. We suppose $\gamma(t)$ is an $MP$ geodesic of $H_0$, then $\sigma(s)=\gamma(t(s))$ is a unit speed geodesic of the system $(M,\tilde{g},0)$ on $M$. 
\end{thm}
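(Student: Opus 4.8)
The plan is to verify the Maupertuis principle directly by the change of variables $s(t)=\int_0^t 2(H_0-q(\gamma(\tau)))\,\der\tau$ and checking that $\sigma(s)=\gamma(t(s))$ has unit speed and satisfies the geodesic equation for the conformal metric $\tilde g=2(H_0-q)g$. First I would compute the speed of $\sigma$: by the chain rule $\sigma'(s)=\dot\gamma(t)\,(dt/ds)$, and since $ds/dt=2(H_0-q(\gamma(t)))$ we get $|\sigma'(s)|^2_{\tilde g}=2(H_0-q)\,|\dot\gamma|^2_g\,(dt/ds)^2$. Using conservation of energy $\tfrac12|\dot\gamma|_g^2+q(\gamma)=H_0$, i.e.\ $|\dot\gamma|_g^2=2(H_0-q)$, this becomes $|\sigma'(s)|^2_{\tilde g}=2(H_0-q)\cdot 2(H_0-q)\cdot(1/(2(H_0-q)))^2=1$, so $\sigma$ is parametrized by $\tilde g$-arclength. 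This uses the fact, recorded just before the statement, that energy $\mathcal E$ is an integral of motion along $\mathcal{MP}$ geodesics.

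Next I would establish the geodesic equation. The cleanest route is variational: an $\mathcal{MP}$ geodesic at energy $H_0$ (with $\Omega=0$, the potential case) is a critical point of the Maupertuis--Jacobi action $\int \sqrt{2(H_0-q(\gamma))}\,|\dot\gamma|_g\,\der t$, which is exactly the length functional $\int|\sigma'|_{\tilde g}\,\der s$ for the metric $\tilde g=2(H_0-q)g$ after reparametrization. Hence the projected curve $\sigma$ is a critical point of $\tilde g$-length, and being unit speed by the computation above, it is an affinely parametrized $\tilde g$-geodesic. Alternatively, one can verify this by a direct Christoffel-symbol computation: writing $\tilde\Gamma$ for the Levi--Civita connection of $\tilde g=e^{2\varphi}g$ with $e^{2\varphi}=2(H_0-q)$, the conformal transformation law $\tilde\Gamma^k_{ij}=\Gamma^k_{ij}+\delta^k_i\partial_j\varphi+\delta^k_j\partial_i\varphi-g_{ij}g^{k\ell}\partial_\ell\varphi$ produces correction terms that, after reparametrization, cancel precisely against the potential force $-\nabla q$ in Newton's equation~\eqref{mp} (with $Y=0$). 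I would carry this out by substituting $\sigma(s)=\gamma(t(s))$ into $\nabla^{\tilde g}_{\sigma'}\sigma'=0$, expanding the derivatives in $s$ back to derivatives in $t$, and matching with~\eqref{mp}.

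The main obstacle, and the place I would spend the most care, is bookkeeping the reparametrization: the curve $\gamma$ is not unit speed in $g$ nor affinely parametrized, so the two $\der/\der s$ derivatives hitting $\sigma'(s)=\dot\gamma\,(dt/ds)$ generate an acceleration term $\ddot\gamma$, a first-order term from $d^2t/ds^2$, and the connection terms; keeping track of which pieces come from the non-affine parametrization versus the conformal factor is where sign and factor errors creep in. The identity that makes everything collapse is $ds/dt=2(H_0-q)=e^{2\varphi}$ together with $|\dot\gamma|^2_g=2(H_0-q)$, and I expect the gradient of the conformal factor $\nabla\varphi=-\nabla q/(2(H_0-q))$ to be exactly what absorbs the force term. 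I would present the variational argument as the primary proof since it is short and conceptual, and relegate the explicit connection computation to a remark. Since this is quoted as Theorem~2.1 of~\cite{AZ:em-rigidity}, it suffices to reproduce the argument in the present notation and note that the potential case $\Omega=0$ is the relevant specialization of their more general magnetic statement.
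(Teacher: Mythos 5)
The paper does not prove this statement at all: it is quoted verbatim as Theorem~2.1 of \cite{AZ:em-rigidity}, and the text explicitly says ``We quote these results from \cite{AZ:em-rigidity} without proof.'' So there is no in-paper argument to compare against; what you have written is a self-contained proof where the paper offers only a citation. Your argument is the standard Maupertuis--Jacobi principle and it is correct. The unit-speed computation is right: with $ds/dt=2(H_0-q)$ and the conservation law $\abs{\dot\gamma}_g^2=2(H_0-q)$ one gets $\abs{\sigma'}^2_{\tilde g}=2(H_0-q)\cdot 2(H_0-q)\cdot\sparen{2(H_0-q)}^{-2}=1$, and the identification of $s$ with $\tilde g$-arclength is consistent with the change of variables displayed in the paper just before the theorem. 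The one place to be careful is the variational step: the claim that an $\mathcal{MP}$ geodesic at energy $H_0$ is a critical point of the abbreviated action $\int\sqrt{2(H_0-q)}\,\abs{\dot\gamma}_g\,\der t$ is itself essentially the Maupertuis principle, so if you take that as the ``primary proof'' you should either derive it (restricting variations to the energy level and comparing with Hamilton's principle) or fall back on the explicit conformal-connection computation you sketch, with $e^{2\varphi}=2(H_0-q)$ and $\nabla\varphi=-\nabla q/(2(H_0-q))$ absorbing the force term $-\nabla q$ in~\eqref{mp}; that computation closes the argument without circularity. Either way the proof is sound and more informative than the paper's bare citation.
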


and
\begin{prop}[Prop 2.2 in \cite{AZ:em-rigidity}]
The $\mathcal{MP}$ potential system $(M,g,q)$ on $M$ of energy $H_0$ is simple if and only if so is system $(M,\tilde{g},0)$. 
\end{prop}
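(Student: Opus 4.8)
The plan is to exploit the Maupertuis correspondence of Theorem~2.1: since $\sigma(s)=\gamma(t(s))$, the $\mathcal{MP}$ geodesics of $(M,g,q)$ at energy $H_0$ and the geodesics of $(M,\tilde g,0)$ are the \emph{same} curves up to the reparametrization $s=s(t)$. Because simplicity forces $H_0>\max_{x\in M}q$, the derivative $s'(t)=2(H_0-q(\gamma(t)))$ is strictly positive, so $t\mapsto s(t)$ is an orientation-preserving diffeomorphism on each trajectory. Simplicity is defined by two conditions --- that $\exp^H_x$ be a diffeomorphism for every $x$, and that $\partial M$ be strictly $\mathcal{MP}$ convex --- and I would establish the equivalence one condition at a time, noting that in the potential case ($\Omega=0$, $Y=0$) we have $\exp^{\mathcal{MP}}_x=\exp^H_x$.

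For the exponential maps, first I would record that the relation $\dot\sigma(0)=\xi/\bigl(2(H_0-q(x))\bigr)$, which follows from $s'(0)=2(H_0-q(x))$ together with $|\xi|^2_g=2(H_0-q(x))$ for $\xi\in S_{H_0}M$, defines a fiberwise radial diffeomorphism $\Phi_x\colon S_{H_0}(T_xM)\to S^{\tilde g}(T_xM)$, $\xi\mapsto \xi/\bigl(2(H_0-q(x))\bigr)$, between the two unit sphere fibers; a direct check gives $\tilde g(\dot\sigma(0),\dot\sigma(0))=1$. Extending $\Phi_x$ radially and composing with the time change $s(t)$ conjugates the flow $\phi_t$ to the $\tilde g$-geodesic flow, and hence conjugates $\exp^H_x$ to $\exp^{\tilde g}_x$ by a diffeomorphism of the relevant domains. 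Consequently $\exp^H_x$ is a diffeomorphism of $(\exp^H_x)^{-1}(M)$ onto $M$ if and only if $\exp^{\tilde g}_x$ is, handling the no-conjugate-points and unique-connectability part of simplicity for all $x$ simultaneously.

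For the boundary I would verify that strict $\mathcal{MP}$ convexity of $\partial M$ for $(M,g,q)$, which with $Y=0$ reads $\Lambda(x,\xi)>-d_xq(\nu(x))$, is equivalent to strict convexity of $\partial M$ in the metric $\tilde g$. Writing $\tilde g=e^{2\varphi}g$ with $\varphi=\tfrac12\log\bigl(2(H_0-q)\bigr)$, so that $\nu\varphi=-\,\partial_\nu q/\bigl(2(H_0-q)\bigr)$, I would substitute into the conformal transformation law for the second fundamental form, $\widetilde{\mathrm{I\!I}}(X,X)=e^{\varphi}\bigl(\mathrm{I\!I}(X,X)-(\nu\varphi)\,g(X,X)\bigr)$, evaluated on $\tilde g$-unit tangent vectors, and check that after clearing the strictly positive conformal factors the inequality $\widetilde{\mathrm{I\!I}}>0$ collapses exactly to the stated $\mathcal{MP}$ convexity condition. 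A cleaner route that sidesteps the computation is to observe that strict convexity is the reparametrization-invariant statement that every trajectory tangent to $\partial M$ curves strictly into the interior; since the two systems share their trajectories as point sets, this property passes between them directly.

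I expect the conformal second-fundamental-form computation to be the main obstacle: one must track the correct signs (inward versus outward normal), the factor relating $g$-unit to $\tilde g$-unit vectors, and confirm that the term $-d_xq(\nu(x))$ is reproduced precisely by $\nu\varphi$. The exponential-map half is essentially formal once the radial conjugating diffeomorphism $\Phi_x$ is in place. Combining the two equivalences yields that $(M,g,q)$ at energy $H_0$ is simple if and only if $(M,\tilde g,0)$ is, which is the claim.
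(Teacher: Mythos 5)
The paper does not actually prove this proposition: it is quoted verbatim from the reference \cite{AZ:em-rigidity} with the explicit remark ``we quote these results \dots without proof,'' and the boundary-convexity half is further delegated to Lemma~A.4 of that reference. Your proposal is therefore a self-contained reconstruction rather than a parallel of anything in this paper, and it follows what is almost certainly the intended argument: use the Maupertuis correspondence of Theorem~2.1 to identify the trajectories as point sets, then check the two defining conditions of simplicity separately. Both halves are essentially sound. For the exponential maps, the one point to state carefully is that the conjugating map is $\Psi_x(t\xi)=s(t;\xi)\,\Phi_x(\xi)$ with a reparametrization $s(t;\xi)$ that depends on the individual trajectory (it integrates $2(H_0-q)$ along $\gamma$), so $\Psi_x$ is only a radially monotone fiber diffeomorphism, not the composition of a linear map with a fixed time change; since $s'(t)=2(H_0-q)>0$ under the standing assumption $H_0>\max_M q$, this still gives $\exp^H_x=\exp^{\tilde g}_x\circ\Psi_x$ with $\Psi_x$ a diffeomorphism of the star-shaped domains, which is all you need. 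For the boundary, your conformal computation does close up, but only after you normalize consistently: the $\mathcal{MP}$ condition is stated on vectors of $g$-length $\sqrt{2(H_0-q)}$ while Riemannian convexity of $\tilde g$ is tested on $\tilde g$-unit vectors, so a factor of $2(H_0-q)$ must be divided out of $\Lambda(x,\xi)$ before comparing with $\nu\varphi=-\partial_\nu q/\bigl(2(H_0-q)\bigr)$; once that is done the inequalities match exactly. Your alternative parametrization-invariant characterization of strict convexity (tangent trajectories leave $M$ to second order, a statement about unparametrized curves) is a legitimate and cleaner way to bypass the tensor computation. In short: correct, and it supplies a proof the paper omits.
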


We use these two results to prove the following:
\begin{lem}
\label{lma:hamilton-riemann}
Suppose $(M,g,q)$ is simple at energy $H_0$ w.r.t. the Hamiltonian $H$.
Define a new metric $\tilde g$ conformal to $g$ by $\tilde g=2(H_0-q)g$.
Then $(M,\tilde g)$ is a simple Riemannian manifold and a reparametrization turns $H$-geodesics of energy $H_0$ to unit speed geodesics w.r.t. $\tilde g$.

Moreover, if $\tilde\rt$ denotes the X-ray transform w.r.t. the metric $\tilde g$, then $\rt_{H_0}f(\gamma)=\tilde\rt[f/2(H_0-q)](\gamma\circ r)$, where $r$ is the reparametrization.
\end{lem}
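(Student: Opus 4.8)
The plan is to deduce the entire statement from the two quoted results of~\cite{AZ:em-rigidity} together with a single change of variables in the defining integral; no new geometric input is required. The first observation is that $(M,g,q)$ being simple at energy $H_0$ with respect to $H$ is, in the terminology of the preceding discussion, precisely the statement that the $\mathcal{MP}$ potential system $(M,g,q)$ of energy $H_0$ is simple, since here $\Omega=0$ and the $\mathcal{MP}$ flow is the potential Hamiltonian flow. Note also that $\tilde g=2(H_0-q)g$ is a bona fide metric: simplicity implies $H_0>\max_{x\in M}q$, so the conformal factor is strictly positive on $M$.

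For the first assertion I would invoke Proposition~2.2: since $(M,g,q)$ is simple at energy $H_0$, the system $(M,\tilde g,0)$ is simple. Because this latter system has vanishing magnetic field and vanishing potential, its $\mathcal{MP}$ geodesics are exactly the Riemannian geodesics of $\tilde g$, and the $\mathcal{MP}$ convexity condition $\Lambda(x,\xi)>\langle Y_x(\xi),\nu(x)\rangle-d_xq(\nu(x))$ degenerates to ordinary strict convexity of $\partial M$ with respect to $\tilde g$. Hence $(M,\tilde g)$ is a simple Riemannian manifold in the usual sense.

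The reparametrization claim is then immediate from Theorem~2.1: if $\gamma\colon[0,\tau(x,\xi)]\to M$ is an $H$-geodesic of energy $H_0$ and $s(t)=\int_0^t 2(H_0-q(\gamma(t')))\,\der t'$, then $\sigma(s)=\gamma(t(s))$ is a unit speed $\tilde g$-geodesic. Since $H_0-q>0$ along the curve, the map $t\mapsto s(t)$ is strictly increasing, hence a diffeomorphism onto $[0,L]$ with $L=s(\tau(x,\xi))$ equal to the $\tilde g$-length of $\sigma$; I would set $r$ to be the inverse reparametrization $t(\cdot)$, so that $\gamma\circ r=\sigma$. For the transform identity I would change variables $s=s(t)$, $\der s=2(H_0-q(\gamma(t)))\,\der t$, in the definition $\rt_{H_0}f(\gamma)=\int_0^{\tau(x,\xi)}f(\gamma(t))\,\der t$, using $q(\gamma(t))=q(\sigma(s))$ along the curve to obtain
\begin{align}
\rt_{H_0}f(\gamma)=\int_0^{L}\frac{f(\sigma(s))}{2(H_0-q(\sigma(s)))}\,\der s=\tilde\rt\Big[\frac{f}{2(H_0-q)}\Big](\gamma\circ r),
\end{align}
which is the claimed formula, the right-hand side being exactly the $\tilde g$-X-ray transform of $f/2(H_0-q)$ over the unit speed geodesic $\sigma=\gamma\circ r$.

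The content here is essentially bookkeeping, so I do not expect a serious obstacle. The only two points requiring care are checking that $(M,\tilde g,0)$ being a simple $\mathcal{MP}$ system genuinely coincides with $(M,\tilde g)$ being a simple Riemannian manifold (i.e.\ that the zero-potential, zero-magnetic convexity inequality reduces to the standard second fundamental form condition for geodesic convexity), and correctly matching the integration endpoints so that the upper limit after substitution is the $\tilde g$-arclength $L$ of $\sigma$ rather than the original exit time $\tau(x,\xi)$.
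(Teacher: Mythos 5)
Your proposal is correct and follows the same route as the paper: the simplicity and reparametrization claims are delegated to the quoted results of~\cite{AZ:em-rigidity} (the paper cites its Proposition~2.1), and the transform identity is obtained by the substitution $\der s=2(H_0-q(\gamma(t)))\,\der t$ in the defining integral, which is exactly the ``straightforward calculation'' the paper leaves implicit. Your version merely spells out the bookkeeping (positivity of the conformal factor, monotonicity of $t\mapsto s(t)$, and the endpoint matching) that the paper omits.
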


\begin{proof}
The first part follows from~\cite[Proposition~2.1]{AZ:em-rigidity} and the second part is a straightforward calculation.
\end{proof}

Let us remark that the boundary $\partial M$ is strictly convex with respect to $\tilde g$ if and only if $\Lambda(v,v)>-\partial_\nu q(x)$ for all $(x,v)\in S^*_{H_0}M$ with $x\in\partial M$ and $v\perp\partial M$. Here $\partial_\nu$ is the inward normal derivative and $\Lambda(\cdot,\cdot)$ the second fundamental form.
This calculation can be found in~\cite[Lemma~A.4]{AZ:em-rigidity}.

\begin{thm}
\label{thm:inj-stab-scalar}
Suppose $\dim M\geq2$.
If the manifold is simple for $H_0$, then $\rt_{H_0}$ is injective on $L^2(M)$.
Moreover, we have the stability estimate
\begin{equation}
\aabs{f}_{L^2(M)}
\leq
C\aabs{N_{H_0}f}_{H^1(M')},
\end{equation}
where $M'\subset M$ is a slightly extended manifold and $C$ is a constant depending on the manifold, $q$ and $H_0$.
\end{thm}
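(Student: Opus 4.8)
The plan is to reduce the stability estimate for the Hamiltonian flow transform $\rt_{H_0}$ to the corresponding well-known estimate for the geodesic X-ray transform on a simple Riemannian manifold, exploiting Lemma~\ref{lma:hamilton-riemann}. By that lemma, $(M,\tilde g)$ with $\tilde g = 2(H_0-q)g$ is a simple Riemannian manifold, and after reparametrization the $H$-geodesics of energy $H_0$ become unit-speed $\tilde g$-geodesics, with the change of variables identity $\rt_{H_0}f(\gamma) = \tilde\rt[f/2(H_0-q)](\gamma\circ r)$. Thus injectivity of $\rt_{H_0}$ on $L^2(M)$ is immediate: if $\rt_{H_0}f\equiv0$, then $\tilde\rt[f/2(H_0-q)]\equiv 0$, and since $\tilde\rt$ is injective on $L^2$ for simple manifolds (the classical result of Mukhometov, or its modern treatment via the normal operator being an elliptic pseudodifferential operator of order $-1$), we get $f/2(H_0-q)=0$ a.e., hence $f=0$ because $2(H_0-q)>0$ under simplicity.

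For the quantitative estimate, the key step is to relate the normal operator $N_{H_0} = \rt_{H_0}^*\rt_{H_0}$ to the normal operator $\tilde N = \tilde\rt^*\tilde\rt$ of the $\tilde g$-geodesic X-ray transform. First I would write out the adjoint $\rt_{H_0}^*$ with respect to the natural measures, tracking the Jacobian factors $2(H_0-q)$ that arise from the reparametrization and from the relation between the measure on $\partial_+S_{H_0}M$ and the corresponding measure in the $\tilde g$-geometry. This produces an identity of the schematic form $N_{H_0}f = w_1 \,\tilde N(w_2 f)$, where $w_1,w_2$ are smooth positive weights built from $H_0-q$. Then I would invoke the stability estimate for the geodesic X-ray transform on a simple manifold, namely $\aabs{h}_{L^2(M)} \leq C\aabs{\tilde N h}_{H^1(M')}$, which is Stefanov--Uhlmann's result that $\tilde N$ is an elliptic $\Psi$DO of order $-1$ on the slightly extended simple manifold $(M',\tilde g)$, admitting a left parametrix that is bounded $H^1(M')\to L^2(M)$.

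The final assembly is then bookkeeping: setting $h = w_2 f$, I would apply the X-ray estimate to $h$, and then convert $\aabs{\tilde N h}_{H^1}$ back to $\aabs{N_{H_0}f}_{H^1}$ using the factorization $N_{H_0}f = w_1\tilde N h$. Since $w_1,w_2$ are smooth, bounded above and below away from zero (by simplicity $H_0-q$ is bounded below on the compact $M$, and $q$ is smooth), multiplication by $w_1^{-1}$ and $w_2^{\pm1}$ are bounded isomorphisms of $H^1(M')$ and of $L^2(M)$, so all the weights can be absorbed into the constant $C$, which accordingly depends on the manifold, on $q$, and on $H_0$ through these weights and their derivatives.

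The main obstacle I anticipate is the careful treatment of the extension to $M'$ and the mapping properties near the boundary. The Stefanov--Uhlmann framework requires extending the simple manifold $(M,\tilde g)$ to a slightly larger simple manifold $(M',\tilde g)$ so that the normal operator of the X-ray transform is a genuine elliptic $\Psi$DO in the interior of $M$, with the parametrix controlling $\aabs{\cdot}_{L^2(M)}$ by $\aabs{\cdot}_{H^1(M')}$; I must verify that the extended metric remains simple and that the weighted factorization $N_{H_0} = w_1\tilde N w_2$ is compatible with this extension, i.e.\ that the weights extend smoothly and positively to $M'$. A secondary subtlety is that $\rt_{H_0}$ integrates in the $H$-flow time parameter rather than $\tilde g$-arclength, so the Jacobian $2(H_0-q)$ must be inserted consistently in both the forward transform and its adjoint; getting the exponents right on these weight factors is where a routine-looking computation could go wrong, but it does not affect the final form of the estimate since the weights are harmless smooth positive multipliers.
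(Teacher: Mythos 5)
Your proposal is correct and follows essentially the same route as the paper: reduce to the geodesic X-ray transform on the simple manifold $(M,\tilde g)$ with $\tilde g=2(H_0-q)g$ via Lemma~\ref{lma:hamilton-riemann}, then cite the known injectivity and the Stefanov--Uhlmann normal-operator stability estimate for simple manifolds. The paper's proof is a two-line citation of exactly these facts; you additionally spell out the weighted factorization of $N_{H_0}$ in terms of $\tilde N$ and the extension to $M'$, details the paper leaves implicit.
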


\begin{proof}
By Lemma~\ref{lma:hamilton-riemann} it suffices to show that the X-ray transform on the manifold $(M,\tilde g)$ is injective and stable.
Injectivity~\cite[Theorem~7.1]{DKSU:anisotropic} and stability~\cite[Theorem~3]{SU:tensor-stability} are known for simple manifolds.
\end{proof}

In fact, it suffices to assume that the underlying manifold $(M,g)$ is simple if $H_0$ can be taken arbitrarily large.
The following corollary follows immediately from Theorem~\ref{thm:inj-stab-scalar} above and Lemma~\ref{lma:simple-open} below.

\begin{cor}
\label{cor:simple-inj}
Let $(M,g)$ be a simple manifold of dimension two or higher.
Then for sufficiently large $H_0$
\begin{equation}
\aabs{f}_{L^2(M)}
\leq
C\aabs{N_{H_0}f}_{H^1(M')},
\end{equation}
where $M'\subset M$ is a slightly extended manifold and $C$ is a constant depending on the manifold, $q$ and $H_0$.
\end{cor}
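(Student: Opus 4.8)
The plan is to obtain the corollary by feeding Theorem~\ref{thm:inj-stab-scalar} with the single missing ingredient that a simple Riemannian manifold $(M,g)$ is automatically $\mathcal{MP}$-simple at energy $H_0$ as soon as $H_0$ is large — this is exactly the bridge provided by Lemma~\ref{lma:simple-open}. Once that transfer of simplicity is established, the desired bound $\aabs{f}_{L^2(M)}\leq C\aabs{N_{H_0}f}_{H^1(M')}$ is nothing more than the conclusion of Theorem~\ref{thm:inj-stab-scalar} applied at such an $H_0$, with $C$ absorbing the dependence on the manifold, $q$, and $H_0$. So the whole content is in proving the simplicity transfer.

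To carry out that transfer I would lean on Lemma~\ref{lma:hamilton-riemann}: the $H$-geodesics of energy $H_0$ are, after reparametrization, the unit-speed geodesics of the conformal metric $\tilde g_{H_0}=2(H_0-q)g$, and $(M,g,q)$ is simple at energy $H_0$ if and only if $(M,\tilde g_{H_0})$ is a simple Riemannian manifold. Since simplicity is invariant under multiplying the metric by a positive constant, I would rescale to $\hat g_{H_0}:=\tilde g_{H_0}/(2H_0)=(1-q/H_0)g$. Because $q$ is a fixed smooth function, $q/H_0\to 0$ together with all its derivatives, so $\hat g_{H_0}\to g$ in $C^\infty(M)$ as $H_0\to\infty$.

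The key step is then to invoke the openness of the simplicity condition: a metric sufficiently $C^2$-close to a simple metric is again simple. I would split this into the two defining properties. Absence of conjugate points and the requirement that each Hamiltonian exponential map be a diffeomorphism persist under small perturbations, because the geodesic flow — and hence the behaviour of Jacobi fields — depends continuously on the metric while the relevant (co)sphere bundle is compact, which yields a uniform non-degeneracy that survives the perturbation $\hat g_{H_0}\to g$. Strict convexity of $\partial M$ is a strict inequality for the second fundamental form over the compact boundary sphere bundle, hence also $C^2$-open; here I would additionally use the remark following Lemma~\ref{lma:hamilton-riemann}, namely that the covectors $(x,v)\in S^*_{H_0}M$ have $g$-length $\sqrt{2(H_0-q)}$, so $\Lambda(v,v)$ grows like $H_0$ and eventually dominates the fixed bound $-\partial_\nu q$. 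Thus $g$-strict convexity of $\partial M$ forces $\mathcal{MP}$-strict convexity for all large $H_0$. Combining the two, $\hat g_{H_0}$ (and hence $\tilde g_{H_0}$) is simple for $H_0$ large, so $(M,g,q)$ is simple at energy $H_0$, and Theorem~\ref{thm:inj-stab-scalar} then delivers the estimate.

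I expect the main obstacle to be the openness of simplicity, and within it specifically the stability of the no-conjugate-points/diffeomorphism property along the convergence $\hat g_{H_0}\to g$. This is the place where one must argue carefully with the continuous dependence of the geodesic flow on the metric on a compact manifold with boundary, and extract a uniform (rather than merely pointwise) control; by contrast, the boundary-convexity piece is comparatively direct once the growth of the fiber length with $H_0$ is noted.
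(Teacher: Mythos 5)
Your proposal is correct and follows essentially the same route as the paper: the corollary is obtained by combining Theorem~\ref{thm:inj-stab-scalar} with Lemma~\ref{lma:simple-open}, whose proof rescales $\tilde g_{H_0}$ to $(1-q/H_0)g\to g$ and invokes the openness of the simplicity condition. You merely flesh out the openness argument (conjugate points, boundary convexity) that the paper dismisses as ``well known,'' so there is no substantive difference.
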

The corollary applies, in particular, to the closure of any strictly convex and bounded Euclidean domain.

\begin{lem}
\label{lma:simple-open}
If $(M,g)$ is simple, then the manifold $M$ is simple at energy $H_0$ for large enough $H_0$.
\end{lem}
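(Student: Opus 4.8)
The plan is to reduce the statement to the openness of the class of simple Riemannian metrics under small perturbations, exploiting the conformal description of $H$-geodesics already established. By Lemma~\ref{lma:hamilton-riemann}, together with Proposition~2.2 of~\cite{AZ:em-rigidity}, for any $H_0>\max_{x\in M}q$ the system $(M,g,q)$ is simple at energy $H_0$ if and only if the conformal metric $\tilde g_{H_0}:=2(H_0-q)g$ is a simple Riemannian metric; the hypothesis $H_0>\max_{x\in M}q$, which guarantees that the conformal factor is positive and which holds automatically once $H_0$ is large, is exactly what is needed for $\tilde g_{H_0}$ to be a genuine metric. Thus it suffices to prove that $\tilde g_{H_0}$ is simple for all sufficiently large $H_0$.

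Next I would remove the overall size of the conformal factor by rescaling. Writing $\hat g_{H_0}:=\tilde g_{H_0}/(2H_0)=(1-q/H_0)g$, observe that $\tilde g_{H_0}$ and $\hat g_{H_0}$ differ by the positive constant $2H_0$. Multiplication of a metric by a constant leaves the Levi-Civita connection, and hence the parametrized geodesics, the exponential map, the Jacobi equation, and its conjugate points all unchanged, and it scales the second fundamental form of $\partial M$ by a positive constant; consequently simplicity is invariant under constant rescaling, and $\tilde g_{H_0}$ is simple if and only if $\hat g_{H_0}$ is simple. As $H_0\to\infty$ we have $\hat g_{H_0}\to g$ in $C^\infty(M)$, since $\aabs{\hat g_{H_0}-g}_{C^k(M)}=H_0^{-1}\aabs{q\,g}_{C^k(M)}\to 0$ for every $k$; in particular the convergence holds in $C^2$.

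Finally I would invoke the openness of simplicity in the $C^2$ topology: since $(M,g)$ is simple by hypothesis and $\hat g_{H_0}\to g$ in $C^2$, the metric $\hat g_{H_0}$ is simple once $H_0$ is large, which completes the argument. The main work is this openness statement, and I would justify it as follows. Strict convexity of $\partial M$ amounts to the strict inequality $\Lambda(x,v)>0$ on the compact unit sphere bundle of $\partial M$; since the second fundamental form depends continuously on the metric in the $C^2$ topology, a compactness argument shows this inequality persists under small perturbations. The delicate part is the global property that each $\exp_x$ is a diffeomorphism from $(\exp_x)^{-1}(M)$ onto $M$, which encodes both the absence of conjugate points and global injectivity. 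On the simple manifold $(M,g)$ the differential $d\exp_x$ is everywhere nonsingular on the compact set $(\exp_x)^{-1}(M)$; as Jacobi fields, and hence $d\exp_x$, depend continuously on the metric, a uniform lower bound on $(M,g)$ combined with this continuity keeps $d\exp_x$ invertible and $\exp_x$ injective for all metrics $C^2$-close to $g$. This openness of the simple class is standard in the boundary rigidity literature, and I expect that verifying the persistence of the diffeomorphism property is the only substantive obstacle, the reduction and rescaling steps being formal.
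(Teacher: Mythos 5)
Your proof is correct and follows essentially the same route as the paper: reduce via Lemma~\ref{lma:hamilton-riemann} to simplicity of $\tilde g=2(H_0-q)g$, note that constant rescaling preserves simplicity so it suffices to treat $(1-q/H_0)g$, and conclude from the openness of the simple class since $(1-q/H_0)g\to g$. The paper simply cites this openness as well known, whereas you additionally sketch its justification, but the structure of the argument is identical.
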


\begin{proof}
It is well known that simplicity is an open condition: small perturbations of simple metrics are still simple. Therefore for $H_0$ large enough the metric $(1-q/H_0)g$ is simple.
Rescaling the metric does not alter simplicity, so also the metric $\tilde g=2(H_0-q)g$ is simple.
The claim then follows from Lemma~\ref{lma:hamilton-riemann}.
\end{proof}

\begin{rmk}
This remark may be of interest for integral geometers, although not strictly relevant for this paper.

In a similar way we can prove injectivity results for the Hamiltonian ray transform of tensor fields.
For a smooth tensor field $f$ of order $m$ the formula of lemma~\ref{lma:hamilton-riemann} is $\rt_{H_0}f(\gamma)=\tilde\rt[(2H_0-2q)^{m-1}f](\gamma\circ r)$.
If solenoidal injectivity is known for the geodesic X-ray transform of tensor fields on the manifold $(M,\tilde g)$, then we know that $(2H_0-2q)^{m-1}f=\sigma\tilde\der h$ for some tensor $h$ of order $m-1$ (here $\tilde\der g$ is the covariant derivative w.r.t. the metric $\tilde g$ and $\sigma$ is symmetrization).
On simple surfaces solenoidal injectivity is known for tensors of all orders~\cite[Theorem~1.1]{PSU:tensor-surface}, and there are several solenoidal injectivity results for higher dimensions as well (see eg.~\cite{PSU:tensor-survey,PSU:anosov-mfld,SU:tensor-stability,AR:one-form}).
In particular, solenoidal injectivity is known for $m=0$ and $m=1$ (see~\cite{AR:one-form}).
We refer to the survey article~\cite{PSU:tensor-survey} for more details on known results.

If $m=1$, $\rt_{H_0}f=0$ and solenoidal injectivity of $\tilde\rt$ imply that $f=\der h$ for some smooth scalar function $h$.
If $m\geq2$, the gauge condition is more complicated.
We will not pursue this direction further here, as it is irrelevant for our main problem.
\end{rmk}

\section{Observability Inequalities}
We prove the main theorem, Theorem \ref{main} in this section. We consider our globally defined complex optics solutions $\tilde{U}_1$ and $\tilde{U}_2$, which were constructed previously. Dropping the superscripts $x_0,\omega_0$ where it is understood, from our approximation we know that the main term of interest is 
\begin{align}\label{hello}
\sabs{|u_1|-|u_2|}=\sabs{|\tilde{U}_1|-|\tilde{U}_2|}+\mathcal{O}(\lambda^{-1/2})
\end{align}
for $x\in \partial M$. 
This estimate is a result of building an approximate solution with $N$ sufficiently large. We conclude from~\eqref{error3} and~\eqref{comperror}
\begin{align}
\norm{\tilde{U}_1-u_1}_{H^s(M')}\leq \frac{C}{\sqrt{\lambda}}. 
\end{align}
Here $C$ is a generic constant, which depends on the $C^{N+s}(M)$ norm of $n_1^2(x)$ with $N$ such that $\max \{(1-d)/2+2+2s,0\}+1$. We use the fact that $\tilde{U}_1$, and $u_1$ are bounded in $C^{N+s}(M)$ norm to conclude
\begin{align}\label{difference2}
\norm{\tilde{U}_1-u_1}_{C^0(M')}\leq \frac{C}{\sqrt{\lambda}} 
\end{align}
by Sobolev embedding, provided $s>d/2$

We use the estimate on the zeroth order terms
\begin{align}\label{difference}
\norm{U_1-a^1\exp(i\lambda\psi)}_{C^0(M')}\leq \frac{C}{\lambda}.
\end{align}
where $a^1$ is the first order term, found to leading order by \eqref{approximation} and \eqref{approximation1}. Combining the estimates~\eqref{difference} and~\eqref{difference2}, gives the estimate~\eqref{hello}. Now we need to combine the estimates to recover the X-ray transform. We would like to use the first order coefficients to reconstruct the coefficient $n^2(x)$. Using~\eqref{approximation}, we know that 
\begin{align}\label{difference3}
&\textstyle{ \sabs{a^1(x)\exp(i\lambda\psi_1(x))-a^1_{0}(0)\exp\sparen{-\int\limits_0^s(n_1^2(x_1(t))\,dt-\int\limits_0^s\mathrm{tr}\mathcal{M}_1(t)\,dt}\exp(i\lambda\psi_1(x))\phi(x)}}\\& \nonumber = \mathcal{O}(\lambda^{-1/2d}),
\end{align}
by choice of the cutoff function in~\eqref{bye}. 

Examining~\eqref{hello}, we are interested in the left hand side. Using~\eqref{difference3}, we would like to approximate it by 
\begin{align}\label{hi}
&\left\lvert a_0(0)\sparen{\exp\sparen{-\int\limits_0^sn_1^2(x_1(t))\,dt}-\exp\sparen{-\int\limits_0^sn_2^2(x_2(t))\,dt}}\right.
\times \\& \nonumber \left.\exp\sparen{-\int\limits_0^s\mathrm{tr}\mathcal{M}_1(t)\,dt}\exp(\lambda\underline{\psi_1(x))}\phi(x)\right\rvert,
\end{align}
where
\begin{align}
\underline{\psi_1(x)}=-(x-x_1(s))\Im \mathcal{M}_1(s)\cdot (x-x_1(s)).
\end{align}

We need the following lemma to control the error, which works for level sets of general Hamiltonians $H$, however we chose to focus on the case $H(x,p)=|p|_g^2-n^2$ which is relevant for our solution curve. Again, we remind the reader the equation \eqref{time3}
is the correct one that we want to solve to high order by the derivation of Lemma \ref{lem:time2}.  Recall Grownwall's identity:

\begin{thm}[Grownwall's Inequality]
Let $w: [t_0,t_1]\rightarrow \mathbb{R}^+$ be continuous and non-negative and suppose that $w$ obeys the integral inequality
\begin{align}
w(t)\leq A+\int\limits_{t_0}^tB(s)w(s)\,ds
\end{align}
for all $t\in[t_0,t_1]$ whenever $A\geq 0$ and $B:[t_0,t_1]\rightarrow\mathbb{R}^+$ is continuous and non-negative. Then we have 
\begin{align}
w(t)\leq A\exp\left(\int\limits_{t_0}^t B(s)\right)\,ds
\end{align}
for all $t\in [t_0,t_1]$. 
\end{thm}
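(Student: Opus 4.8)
The plan is to use the standard integrating-factor argument. First I would introduce the auxiliary function
\begin{align}
v(t)=A+\int_{t_0}^t B(s)w(s)\,ds,
\end{align}
which by hypothesis satisfies $w(t)\le v(t)$ pointwise and $v(t_0)=A$. Since $B$ and $w$ are continuous, their product $B(t)w(t)$ is continuous, and the fundamental theorem of calculus shows that $v$ is $C^1$ with $v'(t)=B(t)w(t)$. Using $w(t)\le v(t)$ together with $B(t)\ge 0$, I obtain the differential inequality $v'(t)\le B(t)v(t)$.

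Next I would convert this into a monotonicity statement by multiplying with the integrating factor $\mu(t)=\exp\bigl(-\int_{t_0}^t B(s)\,ds\bigr)$. Because $\mu>0$ and $\mu'(t)=-B(t)\mu(t)$, the product rule yields
\begin{align}
\frac{d}{dt}\bigl(\mu(t)v(t)\bigr)=\mu(t)\bigl(v'(t)-B(t)v(t)\bigr)\le 0,
\end{align}
so $\mu(t)v(t)$ is non-increasing on $[t_0,t_1]$. Comparing with its value at $t_0$ gives $\mu(t)v(t)\le v(t_0)=A$, that is, $v(t)\le A\exp\bigl(\int_{t_0}^t B(s)\,ds\bigr)$. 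Combining this with $w(t)\le v(t)$ finishes the proof.

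The only mildly delicate point, and what I would regard as the \emph{obstacle} worth a line of comment, is the degenerate case $A=0$ together with the regularity needed to differentiate $v$. For the former, the argument above applies verbatim and simply forces $v\equiv 0$, hence $w\equiv 0$; alternatively one may run the estimate with $A$ replaced by $A+\varepsilon$ for $\varepsilon>0$ and let $\varepsilon\downarrow 0$ at the end, which sidesteps any worry about vanishing $A$. For the latter, the assumed continuity of $B$ and $w$ is exactly what guarantees that $v$ is $C^1$, so differentiating the integral defining $v$ is legitimate; if one only assumed $B$ integrable one would instead work with an absolutely continuous $v$ and argue almost everywhere. Since the hypotheses already impose continuity, no such refinement is needed here.
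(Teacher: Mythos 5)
Your proof is correct: it is the standard integrating-factor argument for Gr\"onwall's inequality, and the handling of the case $A=0$ and of the regularity needed to differentiate $v$ is sound. The paper itself states this theorem as a classical result and supplies no proof of its own, so there is nothing to compare against; your argument is a perfectly good (and complete) justification of the statement as used in the paper.
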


We let $\epsilon\in (0,1)$ be a fixed positive constant. We set $\tilde{H}(x,p)=p\cdot\nabla_{p}H(x,p)$. 
\begin{lem} (Uniqueness)
  \label{uniqueness}
Assume that $\psi_1,\psi_2\in C^\infty(M;\mathbb C)$, $s_0,s\in (a,b)$,
satisfy
$$
\begin{gathered}\label{time2}
\psi_1(x)=\psi_2(x)+\mathcal O((d_{g'}(x,x_1(s_0))^{m+1}) ,\,\textrm{for}\,\, s_0\quad
(\partial_x\psi_j)(x_j(s))=p_j(s),\\
\partial_s\psi_j(x)+\tilde{H}(x,\nabla_g\psi_j(x))=\mathcal O(d_{g'}(x,x_j(s))^{m+1}) 
\end{gathered}
$$
for some $m\in\mathbb N$. Then the following holds
\begin{align}
&\sup\limits_{s\in (a,b)} |x_1(s)-x_2(s)|=\mathcal{O}(\epsilon)\qquad 
\sup\limits_{s\in (a,b)}|\mathcal{M}_1(s)-\mathcal{M}_2(s)|=\mathcal{O}(\epsilon)
\end{align}
\end{lem}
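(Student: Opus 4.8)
The plan is to set up a coupled system of ODEs for the differences $\Delta x(s)=x_1(s)-x_2(s)$ and $\Delta\mathcal M(s)=\mathcal M_1(s)-\mathcal M_2(s)$ and then close a Gronwall-type estimate. Recall from Lemma~\ref{lem:time2} that the centers $x_j(s)$ evolve by the Hamiltonian flow \eqref{flow}, and from the derivation following \eqref{time3} that the Hessians $\mathcal M_j(s)$ evolve by the matrix Riccati equation written out in Section~3, both driven by derivatives of $\tilde H$ evaluated along $(x_j(s),p_j(s))$. The hypotheses say that each $\psi_j$ solves the eikonal-type equation $\partial_s\psi_j+\tilde H(x,\nabla_g\psi_j)=\mathcal O(d_{g'}(x,x_j(s))^{m+1})$ to high order, that $\nabla_g\psi_j(x_j(s))=p_j(s)$, and crucially that $\psi_1$ and $\psi_2$ agree to order $m+1$ at the single point $x_1(s_0)$. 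My first step is to differentiate these matching conditions in $x$ at the base points to extract matching of the $0$th, $1$st and $2$nd order jets of $\psi_1,\psi_2$ at $s_0$; this translates directly into smallness of $\Delta x(s_0)$, $\Delta p(s_0)$, and $\Delta\mathcal M(s_0)$, giving the initial data $|\Delta x(s_0)|,|\Delta\mathcal M(s_0)|=\mathcal O(\epsilon)$ for the differential system.

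Next I would subtract the two flow equations term by term. For the centers, $\frac{d}{ds}\Delta x_i = 2g^{ij}(x_1)p_{1,j}-2g^{ij}(x_2)p_{2,j}$; writing the right side as a sum of a factor depending on the metric difference at nearby points and a factor linear in $\Delta x,\Delta p$, and using that $g,n^2$ are $C^\infty$ (hence Lipschitz with bounds controlled by $\mathrm{diam}_H(M)$), each term is bounded by $C(|\Delta x|+|\Delta p|)$. The same bookkeeping applied to the $\dot p$ equation and to the Riccati equation for $\Delta\mathcal M$ — here I would use that both $\mathcal M_j(s)$ are bounded (as recorded in Lemma~\ref{expsize} and the surrounding discussion, $\mathrm{Im}\,\mathcal M_j\succ0$ and $\mathcal M_j$ stays in a fixed compact set) so that the quadratic Riccati nonlinearity $\partial^2_{pp}\tilde H\,\mathcal M_{j\ell}\mathcal M_{kr}$ is globally Lipschitz in $\mathcal M_j$ along the flow — yields a closed differential inequality
\begin{align}
\frac{d}{ds}\big(|\Delta x|+|\Delta p|+|\Delta\mathcal M|\big)\leq B(s)\big(|\Delta x|+|\Delta p|+|\Delta\mathcal M|\big)+R(s),
\end{align}
where $B(s)$ is continuous and nonnegative (bounded by the $C^2$ norms of $g,n^2$) and $R(s)$ collects the residual terms coming from the $\mathcal O(d_{g'}^{m+1})$ errors in the eikonal equations, which are themselves $\mathcal O(\epsilon)$ once evaluated along the nearby rays.

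Finally I would invoke Gronwall's inequality (in the form quoted just above the lemma) on $[s_0,s]$ with $A=\mathcal O(\epsilon)$ coming from the initial jet-matching and the integrated residual, to conclude $|\Delta x(s)|+|\Delta\mathcal M(s)|\leq A\exp\!\big(\int_{s_0}^s B(t)\,dt\big)=\mathcal O(\epsilon)$, uniformly for $s\in(a,b)$ since the exponential factor is bounded by a constant depending only on $\mathrm{diam}_H(M)$ and the $C^2$ norms; running the argument backward in $s$ covers $s<s_0$. The main obstacle I anticipate is the Riccati term: establishing that $\Delta\mathcal M$ really does satisfy a \emph{linear} differential inequality requires knowing a priori that $\mathcal M_1,\mathcal M_2$ remain in a common bounded region where the quadratic term is Lipschitz, and that this region is stable along the whole flow — this is where the simplicity of the Hamiltonian flow and the positive-definiteness of $\mathrm{Im}\,\mathcal M$ must be used to prevent blow-up of the Riccati solution, so that the constants stay uniform in $s$.
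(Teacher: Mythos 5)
Your proposal follows essentially the same route as the paper: subtract the Hamiltonian flow equations and the matrix Riccati equations for the two coefficients, bound the differences via the mean value theorem and the smallness of $n_1^2-n_2^2$, and close with Gronwall's inequality (the paper runs Gronwall twice sequentially, first for $x_1-x_2$ and then for $\mathcal{M}_1-\mathcal{M}_2$ using $C_2=\sup_{j,s}\mathcal{M}_j(s)$, rather than once on the coupled system, but this is the same argument). Your added care about extracting the initial data from the jet-matching at $s_0$ and about the a priori boundedness of the Riccati solutions only makes explicit what the paper uses implicitly.
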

\begin{proof}
We can map the neighborhood of the flow on the manifold to the Euclidean plane and use the metric $g_{ij}=\delta_{ij}$ the standard Euclidean one. In this case we have by definition of the Hamiltonian flow.  
\begin{align}
|\frac{d|\tilde{x}|}{ds}\leq |\frac{d\tilde{x}}{ds}|\leq |(n_1-n_2)(x_1(s))|+C|\tilde{x}|
\end{align}
where $C=\sup\limits_x (\nabla n_2(x))$ by the mean value theorem and $\tilde{x}(s)=x_1(s)-x_2(s)$. 
Given initial conditions $\tilde{x}(0)=0$ we can use Gronwall's inequality with $w=|\tilde{x}|$ and $A=\int \limits_{t_0}^t\epsilon\,ds$  to conclude 
\begin{align}
|\tilde{x}(s)|=\mathcal{O}(\epsilon)
\end{align}
since $\norm{n_1^2-n_2^2}_{C^3(M')}<\epsilon$. The $\mathcal{O}$ terms depend on the length of the geodesic. 

We also have by definition of the matrix Ricatti equations:
\begin{align}
\frac{d|\mathcal{\tilde{M}}(s)|}{ds}\leq |\frac{d}{ds}\mathcal{\tilde{M}}(s)|=-D^2n_1^2(x_1(s))+D^2n_2^2(x_2(s))+\mathcal{M}^2_1(s)-\mathcal{M}^2_2(s)
\end{align}
which then implies
\begin{align}
|\frac{d}{ds}\mathcal{\tilde{M}}(s)|\leq |D^2(n_1^2-n_2^2)(x_1(s))|+C_1|\tilde{x}|+C_2|\tilde{M}|
\end{align}
where $C_1=\sup\limits_x (D^3n_2(x))$ by the mean value theorem and $\tilde{M}(s)=\mathcal{M}_1(s)-\mathcal{M}_2(s)$, $C_2=\sup\limits_{j=1,2,s} \{\mathcal{M}_j(s)\}$. 
Applying Gronwall's theorem again gives the second desired inequality. 
\end{proof}

The expansion implies
\begin{align}\label{taylor}
&(x-x_1(s))=(x-x_2(s))+\mathcal{O}(\epsilon)+\mathcal{O}(d_{g'}(x,x_1(s))^2);\\&
\nonumber
(x-x_1(s))\Im\mathcal{M}_1(s)(x-x_1(s))=\\& \nonumber(x-x_2(s))\Im\mathcal{M}_2(s)(x-x_2(s))+\mathcal{O}(\epsilon)+\mathcal{O}(d_{g'}(x,x_1(s))^3)
\end{align}
and also 
\begin{align}\label{matrix}
\Im\mathcal{M}_1(s)=\Im\mathcal{M}_2(s)+\mathcal{O}(\epsilon)
\end{align}

It follows Lemma~\ref{uniqueness} in local coordinates:
\begin{align}\label{expansion2}
|U_2(x)|=&|a_0(0)\exp\sparen{-\int\limits_0^s(n_2^2(x_2(t))\,dt}
\exp\sparen{-\int\limits_0^s\mathrm{tr}\mathcal{M}_1(t)\,dt}\exp(\lambda\underline{\psi_1(x))}\phi_2(x)|\times \\& \nonumber(\exp(-\mathcal{O}(\lambda\epsilon))
\end{align}
with $\underline{\psi_1(x)}$ as above. Since Lemma~\ref{uniqueness} is local, the order terms depend on diam$(M)$ and $C^3(M)$ norm of $n_1^2(x)$ and $n_2^2(x)$, made precise by \eqref{taylor},\eqref{matrix}.  This allows us to conclude~\eqref{hi}, with appropriate loss of error. 
Notice that even if $\epsilon>\frac{1}{\sqrt{\lambda}}$, if we can re-define $U_2$ with a $\phi_2(x)$ which has larger significantly larger support and still obtain the same estimate as \eqref{expansion2}, since by Lemma \ref{expsize}, the contribution of the error estimates outside of a neighborhood of $\Omega(\lambda^{-1/2})$ is negligible. The cutoffs were kept  with support in $\Omega(\lambda^{-1/2})$ to illustrate that the integration in a neighborhood around the tube is the most important contribution, c.f. \cite{liu}. 

We now examine~\eqref{hi}. Because $\Im \mathcal{M}_1(s)$ is a positive definite matrix, by Corollary~\ref{expsize} we have  
\begin{align}\label{mainsup}
\sup\limits_{x\in M}\sabs{\exp(\lambda\underline{\psi_1(x)})\phi(x)}=1.
\end{align} 
Indeed, $x_1(s)$ reaches the boundary and is contained in $\Omega(\lambda^{-1/2})$. Because the other coefficients of~\eqref{hi} are independent of $x$, we obtain that the supremum of~\eqref{hi} over $x\in\partial M$ equals
\begin{align}
C(s)\sabs{a^1_0(0)\sparen{\exp\sparen{-\int\limits_0^{\tau(x_0,\omega_0)}\alpha(n_1^2(x_1(t))\,dt}-\exp\sparen{-\int\limits_0^{\tau(x_0,\omega_0)}\alpha n_2^2(x_2(t))\,dt}}\times E(x)}.
\end{align}
Here 
\begin{align}
C(s)=\exp\sparen{-\int\limits_0^{\tau(x_0,\omega_0)}\mathrm{tr}\mathcal{M}_1(t)\,dt}.
\end{align}
with $\tau(x_0,\omega_0)$ the time it takes for $x(s)$ defined by \eqref{flow} with initial conditions $(x_0,\omega_0)$ to exit $M$. We also have
\begin{align}
\|E(x)\|_{C^0(M')}=\exp(\mathcal{O}(-\lambda\epsilon))
\end{align} 
by \eqref{expansion2} from Lemma \eqref{uniqueness}. 

We need the following lemma to obtain the ray transform which is taken from~\cite{aw}. 
\begin{lem}\label{lem:smallness}
Let $A(x)$ and $B(x)$ be positive functions in $C^0(\mathbb{R})$ and $\varepsilon\in (0,1)$ such that 
\begin{align}\label{small}
\norm{\exp(-A(x))-\exp(-B(x))}_{C^0(\mathbb{R})}<\varepsilon .
\end{align}
Then there is a constant $C$ depending on the $C^0(\mathbb{R})$ norms of $A$ and $B$, such that
\begin{align}
\norm{A(x)-B(x)}_{C^0(\mathbb{R})}<C\varepsilon.
\end{align}
\end{lem}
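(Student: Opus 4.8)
The plan is to exploit the elementary fact that the exponential map $t\mapsto e^{-t}$ is a diffeomorphism of $\mathbb{R}$ with a locally Lipschitz inverse, and to convert the bound on the difference of exponentials into a bound on the difference of the exponents by a mean value argument. First I would record that, since $A$ and $B$ are continuous with controlled $C^0$ norms, say $\norm{A}_{C^0}, \norm{B}_{C^0}\leq R$, the values $A(x)$ and $B(x)$ both lie in the compact interval $[-R,R]$ for every $x$. On this interval the function $\exp(-t)$ has derivative $-\exp(-t)$ whose absolute value is bounded below by $\exp(-R)>0$. Thus $\exp(-t)$ is bi-Lipschitz on $[-R,R]$ with lower Lipschitz constant $\exp(-R)$.

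The key step is then a pointwise application of the mean value theorem. Fix $x\in\mathbb{R}$; by the mean value theorem there is a value $\xi$ between $A(x)$ and $B(x)$ (hence $\xi\in[-R,R]$) such that
\begin{align}
\exp(-A(x))-\exp(-B(x))=-\exp(-\xi)\,\bigl(A(x)-B(x)\bigr).
\end{align}
Taking absolute values and using $\exp(-\xi)\geq \exp(-R)$ gives
\begin{align}
\abs{A(x)-B(x)}=\exp(\xi)\,\abs{\exp(-A(x))-\exp(-B(x))}\leq \exp(R)\,\abs{\exp(-A(x))-\exp(-B(x))}.
\end{align}
Taking the supremum over $x$ and invoking the hypothesis~\eqref{small} yields
\begin{align}
\norm{A-B}_{C^0(\mathbb{R})}\leq \exp(R)\,\norm{\exp(-A)-\exp(-B)}_{C^0(\mathbb{R})}< \exp(R)\,\varepsilon,
\end{align}
so the claim holds with $C=\exp(R)$, which depends only on the $C^0$ norms of $A$ and $B$ as asserted.

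The only subtlety — and the one point I would be careful to state explicitly rather than gloss over — is that the constant $C$ genuinely depends on the $C^0$ norms of $A$ and $B$ and not merely on $\varepsilon$: without an a priori bound $R$ on the exponents there is no uniform lower bound on the derivative of $\exp(-t)$, and the estimate degenerates as $A,B\to+\infty$. Since in the application the exponents are the quantities $\int_0^{\tau}n_i^2\,dt$ (up to the attenuation factor $\alpha$), which are controlled by $\mathrm{diam}_H(M)$ and the $C^0$ norm of $n_i^2$, this dependence is exactly what is available and causes no difficulty. I do not anticipate any real obstacle here; the lemma is a soft one-variable calculus estimate, and the mean value theorem does all the work.
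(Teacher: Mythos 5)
Your proof is correct and takes essentially the same approach as the paper's: both apply the mean value theorem to $\exp(-t)$ between $A(x)$ and $B(x)$, obtaining a factor $\exp(-r_*)$ that is bounded below by $\exp(-R)$ in terms of the $C^0$ norms, so that $C=\exp(R)$ works. You are in fact slightly more explicit than the paper about why the constant must depend on those norms, which is a welcome clarification rather than a deviation.
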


\begin{proof}
By the mean value theorem, there exists an $r_*$ between $B(x)$ and $A(x)$ for each fixed $x$ such that 
\begin{align}
\sabs{\sparen{\exp(-A)-\exp(-B)}}=\sabs{\sparen{-\int\limits_B^A\exp(-r)\,dr}}=\sabs{B-A}\exp(-r_*).
\end{align}
The desired result follows by taking the supremum over $x$ and applying~\eqref{small}.  
\end{proof}

Now we can see by assumption that $\delta, \lambda^{-1}$ are less than $\epsilon_0$. Applying Lemma \eqref{lem:smallness} with $\varepsilon=\epsilon_0$ and $I_{H_0}n_1^2=A,\,\, I_{H_0}n_2^2=B$ we obtain: 
\begin{align}\label{hi2}
\norm{I_{H_0}(n_1^2-n_2^2)}_{C^0(\partial\mathcal{S}M^+)}\leq \tilde{C}_1\sparen{\delta+\frac{C_2}{\lambda^{\beta'}}}.
\end{align}
$\tilde{C}_1$ denotes a generic constant depending on the $C^0(M)$ norm of $n_1^2$, $n_2^2$, while $C_2$ depends on the $C^{N+s}(M)$ norm. $H_0$ denotes the flow associated to $n^2_1(x)$. Here we have used the fact that we can Taylor expand $n^2_2(x_2(s))$ around $x_1(s)$, and $x_1(s)-x_2(s)=\mathcal{O}(\epsilon)$ via \eqref{taylor} and the smallness condition~\eqref{eq:smallness}.

Now we set $n_1^2-n_2^2=\tilde{n}^2$, and we recall that it has compact support. Because we have assumed that $M$ is strictly convex, it follows from~\eqref{hi2} that
\begin{align}
\norm{I_{H_0}(n_1^2-n_2^2)}_{C^1(\partial\mathcal{S}M^+)}\leq C_1\sparen{\delta+\frac{C_2}{\lambda^{\beta'}}}^{\frac{1}{2}}.
\end{align}
by the embedding theorems in the Appendix with $k=2,t=1$. Here $C_1$ denotes a generic constant depending on the $C^2(M)$ norm of $n_1^2$, $n_2^2$. The compact support of $\tilde{n}^2$ implies that
\begin{align}\label{hi3}
&\norm{I^*I_{H_0}(n_1^2-n_2^2)}_{C^1(\partial\mathcal{S}M^+)}\leq  C_1\sparen{\delta+\frac{C_2}{\lambda^{\beta'}}}^{\frac{1}{2}}.
\end{align} 
 To finish the proof of Theorem~\ref{main}, we use~\eqref{hi3} along with the stability estimate of corollary~\ref{cor:simple-inj}. In particular, there is a number $c'>0$ that depends on $M$, $g$ and $n_1$ such that the following holds. If we choose initial speeds with $\abs{\omega_0}=c'$, then the flow transform is stably invertible. We notice that $\tilde{n}^2\in C_0^{\infty}(M)$ and also by choice of $n_1^2=n^2_2\equiv 1$ on $\partial M$ the stability estimate over $M'$ makes no difference, so that the Corollary ~\ref{cor:simple-inj} applies with $n^2(x)=-q(x)$ and $H_0=0$. 

\section{Appendix: Embedding theorems}
This result is taken from Section 2.4 in \cite{M}. We let $f_1,f_2\in C^k(M)$ with $k>1$. We assume there is a constant $C>0$ and an $\varepsilon<<1$ such that
\begin{align}
\|f_1\|_{C^k(\overline{M})}+\|f_2\|_{C^k(\overline{M})} \leq C \qquad \norm{f_1-f_2}_{C(\overline{M})}\leq \varepsilon
\end{align}
We use standard interpolation estimates, see Section 4.3.1 in \cite{int} to normalize the use of norm estimates. As an example using interpolation estimates we obtain for generic $f\in C^k(\overline{M})$ with $k>1$ 
\begin{align}
\|f\|_{C^t(\overline{M})}\leq A\|f\|_{C^{t_1}(\overline{M})}^{1-\theta}\|f\|^{\theta}_{C^{t_2}(\overline{M})}
\end{align}
with $0\leq \theta<1$ with $t_1\geq 0, t_2\geq 0$. As an immediate consequence of the assumptions on $f_1,f_2$ we have that
\begin{align}
\|f_1-f_2\|_{C^t(\overline{M})}\leq C\varepsilon^{\frac{(k-t)}{k}}
\end{align}
for each $t\geq 0$ if $k>t$.


\subsection*{Acknowledgements}
J.I.\ was partially supported by an ERC Starting Grant (grant agreement no 307023).
A.W.\ acknowledges support by EPSRC grant EP/L01937X/1 and ERC Advanced Grant MULTIMOD 26718. A. W. thanks Slava Kurylev for interesting mathematical discussions which helped improve the presentation of the article. 

\bibliographystyle{abbrv}
\bibliography{acoustic}

\end{document}